\theoremstyle{plain}
\newtheorem{thm}{Theorem}[section]
\newtheorem*{thm*}{Theorem}
\newtheorem{lem}[thm]{Lemma}
\newtheorem{prob}[thm]{Problem}
\theoremstyle{definition}
\theoremstyle{remark}
\newtheorem{rem}[thm]{Remark}
\newtheoremstyle{notethm}{}{}{\itshape}{}{\bfseries}{.}{.5em}{\thmnote{#3}}
\theoremstyle{notethm}
\newtheorem*{notethm}{}
\newcommand\cA{{\mathcal A}}
\newcommand\cH{{\mathcal H}}
\newcommand\RR{{\mathbbm R}}
\newcommand\sign{\operatorname{sign}}
\newcommand\refDual{\ref{thm:primal}$^*$}
\newcommand\centralized{\widehat}
\newcommand\allOnes{\mathbb{1}}
\newcommand\allZeros{\mathbb{0}}
\renewcommand\epsilon{\varepsilon}
\newcommand\rhs{{\mathbf b}}
\begin{document}

\title[Zonotopes With Large 2D-Cuts]{Zonotopes With Large 2D-Cuts}
\author[R\"orig \and Witte \and Ziegler]{Thilo R\"orig \and Nikolaus Witte \and G\"unter~M. Ziegler}
\address{Thilo R\"orig, MA 6--2, Inst.\ Mathematics, Technische Universit\"at Berlin, D-10623 Berlin, Germany}
\email{thilosch@math.tu-berlin.de}
\address{Nikolaus Witte, MA 6--2, Inst.\ Mathematics, Technische Universit\"at Berlin, D-10623 Berlin, Germany}
\email{witte@math.tu-berlin.de}
\address{G\"unter~M. Ziegler, MA 6--2, Inst.\ Mathematics, Technische Universit\"at Berlin, D-10623 Berlin, Germany}
\email{ziegler@math.tu-berlin.de}
\thanks{The authors are supported by Deutsche Forschungsgemeinschaft, 
via the DFG Research Group ``Polyhedral Surfaces'', and a Leibniz grant.}
\date{June 03, 2008}%{October 25, 2007}

\begin{abstract}
  There are $d$-dimensional zonotopes with $n$ zones
  for which a $2$-dimensional central section has 
  $\Omega(n^{d-1})$ vertices.
  For $d=3$ this was known, with examples provided by
  the ``Ukrainian easter eggs'' by Eppstein et al.
  Our result is asymptotically optimal for all fixed $d\ge2$.
\end{abstract}

\keywords{Zonotopes, cuts, projections, complexity, Ukrainian easter~egg}
\subjclass[2000]{
52B05, % Combinatorial properties (number of faces, shortest paths, etc.) [See also 05Cxx]
52B11, % $n$-dimensional polytopes
52B12, % Special polytopes (linear programming, centrally symmetric, etc.)
52C35, % Arrangements of points, flats, hyperplanes [See also 32S22]
52C40}% Oriented matroids
\maketitle

\section{Introduction}

Zonotopes, the Minkowski sums of finitely many line segments,
may also be defined as the images of cubes under affine maps,
while their duals can be described as the central sections of cross polytopes.
So, asking for images of zonotopes under projections, or for
central sections of their duals  doesn't give
anything new: We get again zonotopes, resp.\ duals of zonotopes.
The combinatorics of zonotopes and their duals is well understood (see e.g.\ 
\cite[Lect.~7]{ziegler:LOP}): The face lattice of a dual zonotope
may be identified with that of a real hyperplane arrangement.

However, surprising effects arise as soon as one asks for
\emph{sections} of zonotopes, resp.\ \emph{projections} of their duals.
Such questions arise in a variety of contexts.

\begin{figure}[h!]\centering
  \includegraphics[width=.53\textwidth]{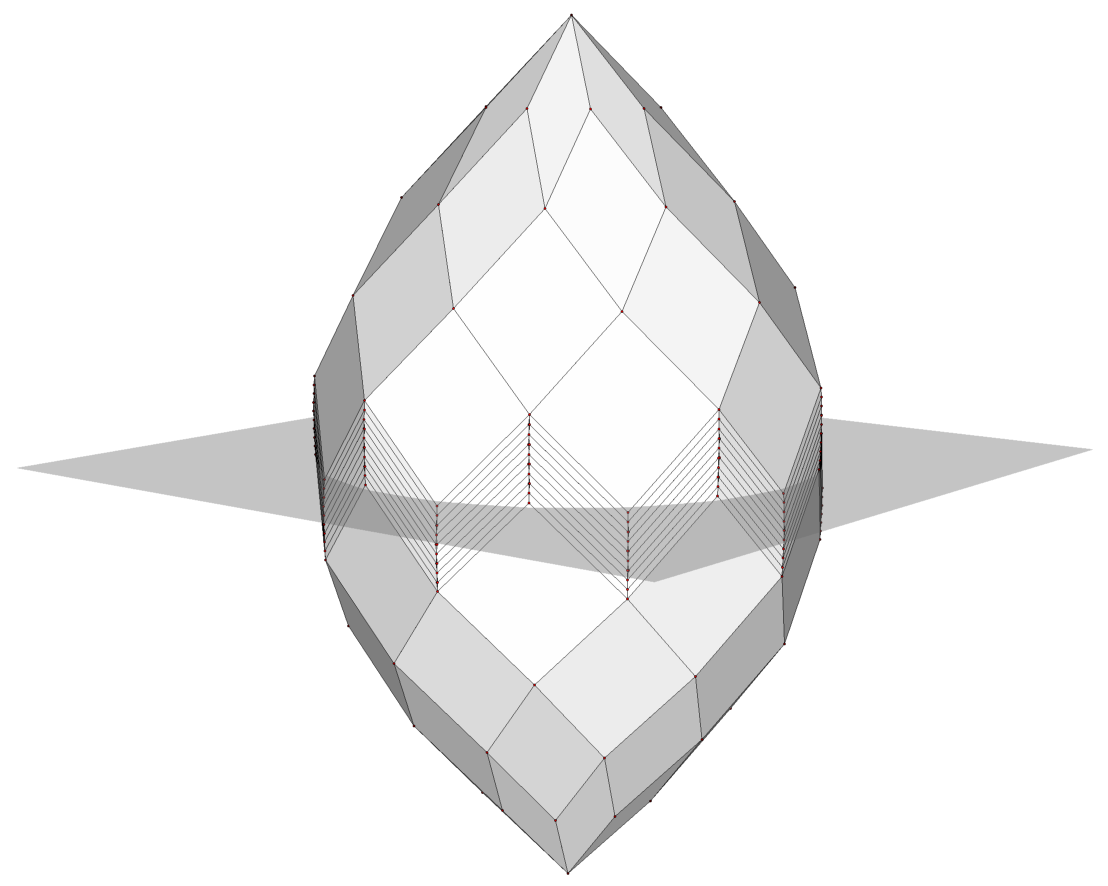}
  \includegraphics[width=.45\textwidth]{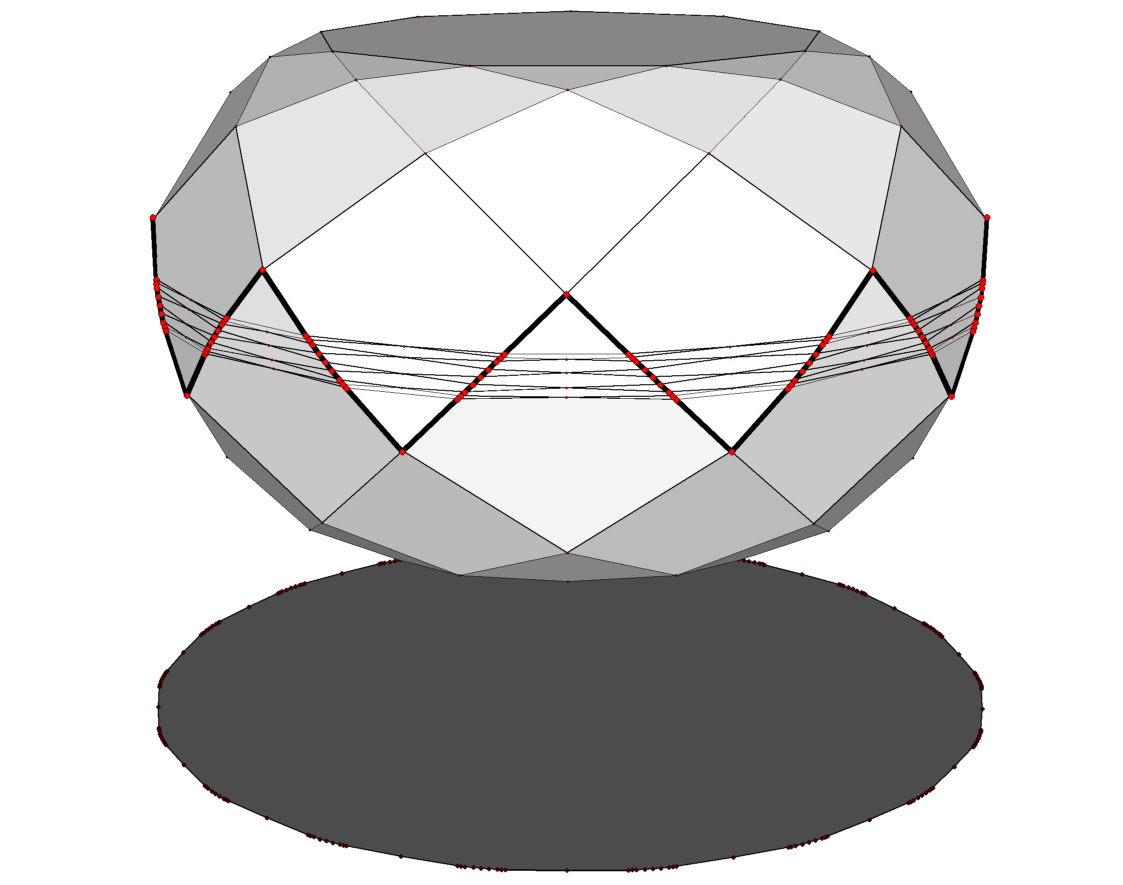}
  \caption{Eppstein's Ukrainian easter egg, and its dual. The 2D-cut, resp.\ shadow boundary, of
    size $\Omega(n^2)$ are marked.
    \label{fig:ukranian_easteregg}}
\end{figure}

For example, the ``Ukrainian Easter eggs'' as displayed by
Eppstein in his wonderful
``Geometry Junkyard'' \cite{eppstein:_ukrain_easter_egg}
are $3$-dimensional zonotopes with $n$ zones that
have a $2$-dimensional section with $\Omega(n^2)$ vertices; see also Figure~\ref{fig:ukranian_easteregg}.
For ``typical'' $3$-dimensional zonotopes with~$n$ zones one expects only
a linear number of vertices in any section, so the 
Ukrainian Easter eggs are surprising objects. Moreover,
such a zonotope has at most $2\binom n2=O(n^2)$ faces, 
so any $2$-dimensional section is a polygon with at most $O(n^2)$
edges/vertices, which shows that for dimension $d=3$ the quadratic behavior is optimal.

Eppstein's presentation of his model draws on work by Bern, 
Eppstein et al.\ \cite{BerEppGui-ESA-95}, where
also complexity questions are asked.
(Let us note that it takes a closer look to interpret the picture given
by Eppstein correctly: It is ``clipped'', and a close-up view shows that
the vertical ``chains of vertices'' hide lines of diamonds; see 
Figure~\ref{fig:easteregg_closeup}.) 
Sections of zonotopes appear also in other areas such as
Support Vector Machines and data depth; see \cite{BerEpp-WADS-01-svm},
\cite{CrispBurges}, \cite{Mosler}.
(Thanks to Marshall Bern for these references.)

\begin{figure}[h!]\centering
  \includegraphics[width=\textwidth,clip]{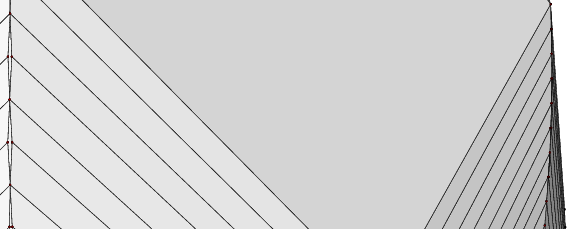}
  \caption{Close-up view of an Ukrainian Easter egg.
    \label{fig:easteregg_closeup}}
\end{figure}

It is natural to ask for high-dimensional versions of the Easter eggs.

\begin{prob}
  What is the maximal number of vertices for a $2$-dimensional central
  section of a $d$-dimensional zonotope with $n$ zones?
\end{prob}

For $d=2$ the answer is trivially $2n=\Theta(n)$, while
for $d=3$ it is of order $\Theta(n^2)$, as seen above.
We answer this question optimally for all fixed $d\ge2$.

\begin{thm}\label{thm:primal}
  For every $d\geq 2$ the maximal complexity (number of vertices)
  for a central $2$D-cut of a $d$-dimensional
  zonotope~$Z$ with~$n$ zones is~$\Theta(n^{d-1})$.
\end{thm}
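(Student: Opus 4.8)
The plan is to prove the two matching bounds $O(n^{d-1})$ and $\Omega(n^{d-1})$ separately. The upper bound is a routine face count; all the work is in the construction behind the lower bound, whose case $d=3$ is the Ukrainian Easter egg. (For $d=2$ the cut is $Z$ itself, a $2n$‑gon, so assume $d\ge3$.)

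\emph{Upper bound.} Let $P=Z\cap E$ be a central $2$D‑cut, a convex polygon. Each vertex of $P$ lies in the relative interior of a proper face $G$ of $Z$ for which $\operatorname{aff}(G)\cap E$ is exactly that point; a dimension count then forces $\dim G\le d-2$, and after an arbitrarily small perturbation of the generators (which does not decrease the maximum) one may assume every such $G$ is a genuine $(d-2)$‑face and that distinct vertices of $P$ use distinct faces. Since the face lattice of a $d$‑zonotope with generators $v_1,\dots,v_n$ is that of the central arrangement $\{v_i^\perp\}$, every $(d-2)$‑face of $Z$ is a translate of one of the $\binom{n}{d-2}$ sub‑zonotopes $\sum_{j}[-v_{i_j},v_{i_j}]$, with $O(n)$ translates occurring; hence $Z$ has $O(n^{d-1})$ faces of dimension $d-2$, and $P$ has $O(n^{d-1})$ vertices.

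\emph{Reduction for the lower bound.} Write $Z=\sum_{i=1}^n[-v_i,v_i]$. For $I=\{i_1,\dots,i_{d-2}\}$ set $L_I=\operatorname{span}\{v_{i_j}\}$ and $B_I=\sum_{j}[-v_{i_j},v_{i_j}]$; the $(d-2)$‑faces of $Z$ with active set $I$ are the translates $F_\sigma=B_I+w_\sigma$, indexed by the $\Theta(n)$ two‑dimensional cones $\sigma$ of the sub‑arrangement induced on the $2$‑plane $L_I^\perp$ by the hyperplanes $\{v_k^\perp:k\notin I\}$, where $w_\sigma=\sum_{k\notin I}\sigma_k v_k$ with $\sigma_k\in\{-1,+1\}$ the sign pattern of $\sigma$. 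If $E$ is complementary to $L_I$ and $\pi_I$ is the projection onto $L_I$ along $E$, then $E$ meets $F_\sigma$ transversally exactly when $\pi_I(w_\sigma)\in\operatorname{relint}(B_I)$. The $\Theta(n^{d-1})$ faces of dimension $d-2$ are distributed over $\Theta(n^{d-2})$ active sets $I$, so it suffices to produce a zonotope and a plane $E$ for which this containment holds for $\Omega(n)$ of the cones $\sigma$, for a positive fraction of the $I$.

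\emph{The construction.} I would take $E=\operatorname{span}(e_1,e_2)$ with complementary $(d-2)$‑plane $W=\operatorname{span}(e_3,\dots,e_d)$, and build $Z$ as a small, carefully tuned deformation of a product: about $n/2$ \emph{base} generators close to $W$ and spanning it, taken tiny and on several widely separated scales, together with about $n/2$ \emph{fan} generators lying within $o(1)$ of $E$, whose $E$‑parts run through the edge directions of a fine planar zonotope. For an active set $I$ of base generators, $L_I$ is close to $W$, so $\pi_I$ is close to the orthogonal projection onto $W$, and $\pi_I(w_\sigma)$ equals, up to lower‑order terms, the $W$‑component of $\sum_{k\notin I}\sigma_k v_k$. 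The fan generators contribute only their tiny $W$‑components; the purpose of the multiscale choice and of the placement of the fan directions is to force, for a positive fraction of the base sets $I$, the signed partial sums $\sum_k\sigma_k v_k$ over initial segments of the cone order $\sigma$ to stay below the size of $B_I$ — by cancellation along long arcs rather than by smallness of the individual terms — for an arc of $\Omega(n)$ consecutive cones $\sigma$, so that $\pi_I(w_\sigma)\in\operatorname{relint}(B_I)$ throughout and $I$ contributes $\Omega(n)$ crossed $(d-2)$‑faces. Summed over the $\binom{\Theta(n)}{d-2}=\Omega(n^{d-2})$ such sets $I$, this produces $\Omega(n^{d-1})$ vertices of $Z\cap E$, recovering the Easter egg when $d=3$.

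\emph{Main obstacle.} The crux is coherence: one plane $E$ must cross many faces in each of $\Omega(n^{d-2})$ different directions $I$ simultaneously, and this constrains the deformation severely. One has to keep $Z$ genuinely $d$‑dimensional and in general position, ensure uniformly over the admissible $I$ that $L_I$ stays complementary to $E$ and $\pi_I$ close to orthogonal projection, and — hardest of all — control the partial sums $\sum_k\sigma_k v_k$ over initial segments of $\sigma$ so that they stay inside the tiny box $B_I$ along $\Omega(n)$‑long arcs rather than drifting out. Balancing the separation of base scales against the well‑spread placement of the fan directions so that these estimates hold at once for a positive fraction of all $I$ is the technical heart of the argument; it can plausibly be organized as an induction on $d$ starting from $d=3$.
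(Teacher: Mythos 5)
Your upper bound is correct and essentially as routine as the paper's, though you take a slightly different count: you bound the number of $(d-2)$-faces of $Z$ (each vertex of $Z\cap E$ lies in one), while the paper simply observes that $Z$ has at most $2\binom{n}{d-1}$ facets and each edge of the polygon lies in one. Both give $O(n^{d-1})$ in a line or two. The real content of the theorem is the lower bound, and there your proposal has a genuine gap. Your framework is sound — parameterize the $(d-2)$-faces with active set $I$ as translates $B_I+w_\sigma$ with $w_\sigma=\sum_{k\notin I}\sigma_k v_k$, and reduce to showing $\pi_I(w_\sigma)\in\operatorname{relint}(B_I)$ for $\Omega(n)$ cones $\sigma$ and $\Omega(n^{d-2})$ sets $I$ simultaneously — and the multiscale intuition (tiny base generators near $W=E^\perp$ on separated scales, fan generators near $E$) is exactly the right one. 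But you never specify the $v_i$, never prove any containment, and explicitly defer the decisive estimates (``can plausibly be organized as an induction on $d$''). That deferred part is not a technicality to be filled in; it \emph{is} the theorem, and nothing in the proposal establishes it.

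For comparison, the paper does precisely this missing work, phrased in the dual (2D-shadows of the dual zonotope $Z^*$). It writes down an explicit $n(d-1)\times d$ matrix $A$ built from $d-1$ blocks $A_i$ at geometric scales $\alpha^{i-1}$, $\alpha=1/(n+1)$, so that each family's zig-zag nests strictly inside the previous one. Lemma 4.2 then identifies $n^{d-1}$ generic vertices of the affine arrangement by solving a small linear system and proving the bound $|v_{i+1}|<\tfrac14\alpha^{i-1}$ via a geometric series — this is exactly the quantitative ``partial sums stay inside the small box'' control that your sketch invokes but does not prove. Lemma 4.3 then applies the projection lemma to show the corresponding edges of $Z^*$ survive the 2D projection, by checking that the truncated facet normals hit every orthant of $\RR^{d-2}$. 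Your primal setup (transversality of $E$ with the translates $B_I+w_\sigma$) could in principle host an analogous explicit construction, but as written the proposal stops where the proof must begin.
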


The upper bound for this theorem is quite obvious: A 
$d$-dimensional zonotope with $n$ zones has at most $2\binom n{d-1}$ facets,
thus any central 2D-section has at most 
$2\binom n{d-1}=O(n^{d-1})$ edges.

To obtain lower bound constructions, it is advisable to 
look at the dual version of the problem.

\begin{prob}[Koltun {\cite[Problem~3]{OWReport}}]\label{prob:dual}
  What is the maximal number of vertices for a $2$-dimensional affine
  image (a ``\,$2$D-shadow'') of a $d$-dimensional dual zonotope with $n$ zones?
\end{prob}

Indeed, this question arose independently: It was posed
by Vladlen Koltun based on the investigation of his
``arrangement method'' for linear programming 
(\emph{see} \cite{Koltun-video}), 
which turned out to be equivalent to a Phase~I procedure for the
``usual'' simplex algorithm
(Hazan \& Megiddo \cite{hazan07:_arran_phase_one_method}).
Our construction in Section~\ref{sec:witte_ziegler} shows that
the ``shadow vertex'' pivot rule is exponential in worst-case
for the arrangement method.

Indeed,
a quick approach to Problem~\ref{prob:dual} is to use known results
about large projections of polytopes. Indeed, 
if $Z$ is a $d$-zonotope with $n$ zones, then the polar dual $Z^*$ of the zonotope $Z$
has the combinatorics
of an arrangement of $n$ hyperplanes in~$R^d$. 
The facets of~$Z^*$ are $(d-1)$-dimensional polytopes with at
most $n$ facets --- and indeed \emph{every}
$(d-1)$-dimensional polytope with at most $n$ facets arises this way.
It is known that such polytopes have exponentionally large
2D-shadows, which in the old days was bad news for the
``shadow vertex'' version of the simplex algorithm \cite{Goldfarb94}
\cite{murty80:_comput}.
Lifted to the dual $d$-zonotope $Z^*$, this also becomes relevant
for Koltun's arrangements method; 
in Section~\ref{sec:witte_ziegler} we briefly present this, and derive
the $\Omega(n^{(d-1)/2})$ lower bound.

However, what we are really heading for is an optimal result, 
dual to Theorem~\ref{thm:primal}. It will be proved in Section~\ref{sec:main}, the main part of this paper.

\begin{notethm}[Theorem~\ref{thm:primal}$^*$]
  \label{thm:dual}
  For every $d\geq 2$ the maximal complexity (number of vertices) for a $2$D-shadow of a
  $d$-dimensional zonotope~$Z^*$ with~$n$ zones is~$\Theta(n^{d-1})$.
\end{notethm}

\subsubsection*{Acknowlegements.}
We are grateful to Vladlen Koltun for his inspiration for this paper.
Our investigations were greatly helped by use of the 
\texttt{polymake} system by Gawrilow \& Joswig
\cite{polymake,gawrilow_joswig:POLYMAKE_I}. 
In particular, we have built \texttt{polymake} models that were also used
to produce the main figures in this paper.

\section{Basics}

Let $A\in\RR^{m\times d}$ be a matrix. We assume that $A$ has full (column) rank~$d$, that no
row is a multiple of another one, and none is a multiple of the first unit-vector~$(1,0,\dots,0)$.
We refer to \cite[Chap.~2]{Z10-2} or \cite[Lect.~7]{ziegler:LOP} for more detailed expositions of
real hyperplane arrangements, the associated zonotopes, and their duals.

\subsection{Hyperplane arrangements}
\label{sec:hyperplane_arrangements}

The matrix $A$ determines an essential \emph{linear hyperplane arrangement}
$\centralized{\cA}=\centralized{\cA}_A$ in~$\RR^d$, whose $m$ hyperplanes are 
    \[
    \centralized H_j = \big\{x\in\RR^d: a_j x=0\big\} \qquad \text{for }j=1,\dots,m
    \]
corresponding to the rows~$a_j$ of~$A$, and
an \emph{affine hyperplane arrangement}~$\cA=\cA_A$ in~$\RR^{d-1}$, whose
hyperplanes are
    \[
    H_j = \big\{x\in\RR^{d-1}: a_j \tbinom1x =0\big\}\qquad \text{for }j=1,\dots,m.
    \]
Given $A$, we obtain $\cA$ from $\centralized\cA$ 
by intersection with the hyperplane~$x_0 = 1$ in~$\RR^d$, a step known as \emph{dehomogenization};
similarly, we obtain $\centralized\cA$ from $\cA$ by \emph{homogenization}.

The points $x\in\RR^d$ and hence the faces of $\centralized\cA$ (and by intersection also the faces
of $\cA$) have a canonical encoding by \emph{sign vectors} 
$\sigma(x)\in\{+1,0,-1\}^m$, via the map
$s_A:x\mapsto(\sign a_1x,\dots,\sign a_mx)$. 
In the following we use the shorthand notation $\{{+},0,{-}\}$ for the set of signs.
The sign vector system $s_A(\RR^d)\subseteq\{{+},0,{-}\}^m$ 
generated this way is the \emph{oriented matroid} \cite{Z10-2} of~$\centralized\cA$.

The sign vectors $\sigma\in s_A(\RR^d)\cap\{{+},{-}\}^m$ in this system (i.e., without zeroes) 
correspond to the \emph{regions} ($d$-dimensional cells) of the arrangement~$\centralized\cA$.
For a non-empty low-dimensional cell~$F$ the sign vectors of the regions con\-tain\-ing~$F$ are
precisely those sign vectors in~$s_A(\RR^d)$ which may be obtained from~$\sigma(F)$ by replacing
each~``$0$'' by either~``$+$'' or~``$-$''.

\subsection{Zonotopes and their duals}
\label{sec:ArrangementsAndZonotopes}

The matrix $A$ also yields a zonotope $Z=Z_A$, as
\[
Z = \Big\{\sum_{i=1}^m \lambda_i a_i:\lambda_i\in[-1,+1]\text{ for }i=1,\dots,m\Big\}.
\]
(In this set-up, $Z$ lives in the vector space $(\RR^d)^*$ of row vectors, while
the dual zonotope $Z^*$ considered below consists of column vectors.)

The dual zonotope $Z^*=Z_A^*$ may be described as 
\begin{equation}\label{eq:Z*}
Z^* = \Big\{x\in\RR^d:\sum_{i=1}^m |a_i x|\le 1\Big\}.
\end{equation}
The domains of linearity of the function $f_A:\RR^d\rightarrow\RR$,
$x\mapsto \sum_{i=1}^m |a_i x|$ are the regions of the hyperplane arrangement $\centralized\cA$.
Their intersections yield the faces of~$\centralized\cA$, 
and these may be identified with the cones spanned by the proper faces of $Z^*$.
Thus the proper faces of $Z^*$ (and, by duality, the non-empty faces of~$Z$)
are identified with sign vectors in $\{{+},0,{-}\}^m$: These are the same sign vectors 
as we got for the arrangement $\centralized\cA$.

Expanding the absolute values in Equation~\eqref{eq:Z*} yields a system of~$2^m$ inequalities
describing~$Z^*$. However, a non-redundant facet description of~$Z^*$ can be obtained from~$A$ and
the combinatorics of~$\centralized{\cA}$ by considering the
inequalities~$\sigma(F)A x \leq 1$ for all sign vectors~$\sigma(F)$ of maximal cells~$F$
of~$\centralized{\cA}$:
\[
Z^* = \big\{x\in\RR^d:\sigma A x \le 1 \text{ for all } \sigma \in s_A(\RR^d)\cap \{+,-\}^m\big\}.
\]

\subsection{Projections of dual zonotopes}
\label{sec:Projections}

Let~$P$ be a $d$-polytope and let~$F\subseteq P$ be a non-empty face.
We define the \emph{matrix of normals}~$N_F$ as the matrix whose rows are the outer facet normals of all
facets containing~$F$. If~$P=\{x \in \RR^d:Nx \leq b\}$ is given by an inequality description, then~$N_F$ is the submatrix
of~$N$ formed by the rows of~$N$ that correspond
to inequalities that are tight at~$F$.
In the case when $P=Z^*$ is a dual zonotope, we derive the
following description of~$N_F$ that will be of
great use later.

\begin{lem}\label{lem:N_F}
  Let $Z^*$ be a $d$-dimensional dual zonotope corresponding to the linear arrangement~$\centralized{\cA}$
  given by the matrix~$A$, and let $F\subset Z^*$ be a non-empty face. Then the rows of~$N_F$ are the linear
  combinations~$\sigma A$ of the rows of~$A$ for all sign vectors $\sigma\in s_A(\RR^d)$ obtained
  from~$\sigma(F)$ by replacing each~``$0$'' by either~``$+$'' or~``$-$''.
\end{lem}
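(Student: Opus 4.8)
The plan is to derive the lemma directly from the non-redundant facet description of $Z^*$ recalled in Section~\ref{sec:ArrangementsAndZonotopes} together with the identification of the proper faces of $Z^*$ with the faces of $\centralized\cA$.

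First I would use that
\[
Z^* = \big\{x\in\RR^d:\sigma A x \le 1 \text{ for all } \sigma \in s_A(\RR^d)\cap \{+,-\}^m\big\}
\]
is a \emph{non-redundant} inequality description. Hence the facets of $Z^*$ are exactly the sets $G_\sigma:=Z^*\cap\{x:\sigma A x = 1\}$, as $\sigma$ ranges over the sign vectors of the regions of $\centralized\cA$, and the outer facet normal of $G_\sigma$ is the row vector $\sigma A$. (It is the \emph{outer} normal and not its negative: since $A$ has full column rank, $f_A$ is a norm, so $0$ is an interior point of $Z^*$ and satisfies $\sigma A\cdot 0 = 0 < 1$ strictly.) By the definition of $N_F$ as the matrix whose rows are the outer facet normals of the facets containing $F$, its rows are therefore precisely the vectors $\sigma A$ for those region sign vectors $\sigma$ with $F\subseteq G_\sigma$.

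It remains to single out the regions whose facet $G_\sigma$ contains $F$. Under the inclusion-preserving identification $G\mapsto\cone(G)$ of the proper faces of $Z^*$ with the faces of $\centralized\cA$ from Section~\ref{sec:ArrangementsAndZonotopes}, the facet $G_\sigma$ corresponds to the region with sign vector $\sigma$, while $F$ corresponds to the cell $\cone(F)$, whose sign vector is $\sigma(F)$. Thus $F\subseteq G_\sigma$ holds if and only if the region with sign vector $\sigma$ contains the cell $\cone(F)$. By the description of the regions incident to a cell recalled in Section~\ref{sec:hyperplane_arrangements}, this is the case exactly for the $\sigma\in s_A(\RR^d)$ obtained from $\sigma(F)$ by replacing each ``$0$'' by ``$+$'' or ``$-$'' (such $\sigma$ automatically lie in $\{+,-\}^m$, hence are region sign vectors). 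Feeding this back into the previous paragraph shows that the rows of $N_F$ are precisely the $\sigma A$ for exactly these $\sigma$, which is the assertion.

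I do not expect a genuine obstacle here — the statement is bookkeeping built on facts already assembled in Section~\ref{sec:hyperplane_arrangements} and Section~\ref{sec:ArrangementsAndZonotopes}. The two points that need care are: (i) the direction of the face--cone correspondence, which is inclusion-\emph{preserving}, so that ``facets of $Z^*$ containing $F$'' matches up with ``regions of $\centralized\cA$ containing the cell of $F$'' and not the reverse; and (ii) the sign convention, i.e.\ checking that $\sigma A$ rather than $-\sigma A$ is the outward normal of $G_\sigma$ --- which is precisely where one uses that $0$ lies in the interior of $Z^*$.
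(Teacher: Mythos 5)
The paper states this lemma without proof, treating it as a direct bookkeeping consequence of the non-redundant facet description of $Z^*$ and the sign-vector combinatorics recalled in Sections~\ref{sec:hyperplane_arrangements} and~\ref{sec:ArrangementsAndZonotopes}; your argument supplies exactly that derivation and is correct. (The orientation check via $0\in\operatorname{int}Z^*$ is welcome but already forced by non-redundancy, since each inequality $\sigma A x\le 1$ is valid on all of $Z^*$, so $\sigma A$ is automatically an outward normal of its facet.)
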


Let $F\subseteq P$ be a non-empty face of a $d$-polytope~$P$, and consider a projection $\pi:\RR^d\rightarrow\RR^k$.
If the outer normal vectors to the facets of $P$ that contain~$F$, projected to the
kernel of~$\pi$, positively span this kernel, then $F$ is mapped
to the face $\pi(F)$ of $\pi(P)$, which is equivalent to $F$, and $\pi^{-1}(\pi(F))\cap P=F$.
In this situation, we say that $F$ \emph{survives} the projection.

Specialized to the projection $\pi_k:\RR^d\rightarrow\RR^k$ to the first $k$ coordinates
and translated to matrix representations, this amounts to the following; see Figure~\ref{fig:survives_projection}.

\begin{lem}[see e.g.\ \cite{ziegler:PPP} \cite{Z102}]\label{lem:proj}
  Let $P$ 
  be a $d$-polytope,~$F$ a non-empty face,
  and let $N_F$ be its matrix of normals.
  If the rows of the matrix~$N_F$, truncated to the last $d-k$ components, positively
  span~$\RR^{d-k}$, then $F$ survives the orthogonal
  projection~$\pi_k$ to the first~$k$ coordinates.
\end{lem}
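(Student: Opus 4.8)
The plan is to obtain Lemma~\ref{lem:proj} as the coordinate-projection instance of the general survival criterion recalled in the paragraph just before it; the only work is to identify the relevant kernel and the orthogonal projection onto it. For $\pi_k\colon\RR^d\to\RR^k$ the kernel is the coordinate subspace $\{0\}^k\times\RR^{d-k}$, and the orthogonal projection of $\RR^d$ onto this kernel is exactly the map sending a vector to its last $d-k$ coordinates. Since by definition the rows of $N_F$ are precisely the outer normals of the facets of $P$ containing $F$, the hypothesis ``the rows of $N_F$, truncated to their last $d-k$ components, positively span $\RR^{d-k}$'' is literally the hypothesis ``the outer normals of the facets containing $F$, projected to $\ker\pi_k$, positively span $\ker\pi_k$'' of that criterion. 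Hence $F$ survives $\pi_k$, and we are done.

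Should one prefer a self-contained argument, I would verify the defining properties of ``survives'' directly. Write $P=\{x\in\RR^d:Nx\le b\}$, let $I$ index the inequalities tight on $F$ (so these rows $n_i$, $i\in I$, are exactly the rows of $N_F$), and write $\bar n_i\in\RR^{d-k}$ for the truncation of $n_i$ to its last $d-k$ entries. If $y\in F$ and $x\in P$ satisfy $\pi_k(x)=\pi_k(y)$, then $x-y=(0,\dots,0,w)$ for some $w\in\RR^{d-k}$, and $n_i x\le b_i=n_i y$ gives $\bar n_i\cdot w\le 0$ for every $i\in I$. Writing $w=\sum_{i\in I}\mu_i\bar n_i$ with $\mu_i\ge 0$ (possible because the $\bar n_i$ positively span $\RR^{d-k}$) yields $\|w\|^2=\sum_{i\in I}\mu_i(\bar n_i\cdot w)\le 0$, so $w=0$ and $x=y$. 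This simultaneously shows $\pi_k^{-1}(\pi_k(F))\cap P=F$ and that $\pi_k$ is injective on $F$.

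It then remains to see that $\pi_k(F)$ is a face of $\pi_k(P)$, necessarily then affinely isomorphic to $F$ via the bijection $\pi_k|_F$. Here I use the standard fact that, $F$ being a face, its normal cone in $P$ is $\cone\{n_i:i\in I\}$, whose relative interior consists of exactly those functionals supported by $F$. Truncation to the last $d-k$ coordinates is linear, hence carries the relative interior of this cone onto the relative interior of $\cone\{\bar n_i:i\in I\}=\RR^{d-k}$, which contains $0$; so there is a vector $v=(c',0)$ with $c'\in\RR^k$ lying in the relative interior of $\cone\{n_i:i\in I\}$. Such a $v$ attains its maximum over $P$ precisely on $F$, and since $v\cdot x=c'\cdot\pi_k(x)$ it follows that $c'$ attains its maximum over $\pi_k(P)$ precisely on $\pi_k(F)$, so $\pi_k(F)$ is a face. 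If one simply invokes the general criterion there is no obstacle at all; in a from-scratch proof the only real content beyond this bookkeeping is this last normal-cone/relative-interior step, and checking the Farkas-type positivity argument of the previous paragraph.
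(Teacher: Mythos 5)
The paper does not prove Lemma~\ref{lem:proj}: it observes, exactly as you do in your first paragraph, that the statement is the coordinate specialization of the general survival criterion recalled in the preceding paragraph, and for the proof refers to \cite{ziegler:PPP} and \cite{Z102}. Your identification of $\ker\pi_k$ with the span of the last $d-k$ coordinates, and of its orthogonal projection with truncation to the last $d-k$ entries, is precisely the paper's ``translated to matrix representations, this amounts to the following.'' So the first half of your proposal matches the paper's treatment.

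Your self-contained argument is also correct and is a genuine addition, since the paper supplies none. The Farkas-type step (writing $w=\sum_{i\in I}\mu_i\bar n_i$ with $\mu_i\ge 0$, then $\|w\|^2=\sum_i\mu_i(\bar n_i\cdot w)\le 0$) cleanly yields $w=0$, hence both $\pi_k^{-1}(\pi_k(F))\cap P=F$ and injectivity of $\pi_k$ on $F$. The second step is the one that carries real content, and it hinges on the fact that a linear map sends the relative interior of a convex set onto the relative interior of its image; it would be worth citing this explicitly (it is Rockafellar's Theorem~6.6), since it is what guarantees the existence of $v=(c',0)$ in the relative interior of the normal cone $\cone\{n_i:i\in I\}$. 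Given such a $v$, the remainder --- $v$ is maximized over $P$ exactly on $F$, $v\cdot x=c'\cdot\pi_k(x)$, hence $c'$ exposes $\pi_k(F)$ as a face of $\pi_k(P)$ --- is exactly right.
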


\begin{figure}[h!]
  \centering
  \begin{overpic}[width=.55\textwidth]{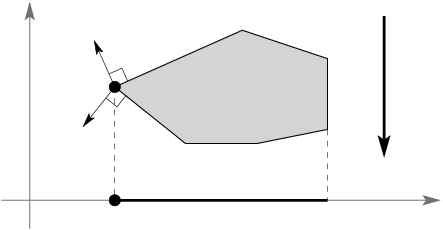}
    \put(90,32){$\pi_1$}
    \put(51,30){$P$}
    \put(44,0){$\pi_1(P)$}
    \put(19.5,30.5){$F$}
  \end{overpic}
  \caption{Survival of a face~$F$ in the projection~$\pi_1$ to the first coordinate.
    \label{fig:survives_projection}}
\end{figure}

This ``projection lemma'' gives a sufficient condition for a face to survive.
In a general position situation, when proper faces of $\pi(P)$ cannot be generated
by higher-dimensional faces of~$P$, the condition of Lemma~\ref{lem:proj} is also necessary
\cite[Sect.~2.3]{Z102}.

\section{Dual Zonotopes with large 2D-Shadows}\label{sec:witte_ziegler}

In this section we present an exponential (yet not optimal) lower bound for the maximal size of
2D-shadows of dual zonotopes. It is merely a combination of known results
about polytopes and their projections.
For simplicity, we restrict to the case of odd dimension~$d$.

\begin{thm}\label{thm:witte_ziegler_dual}
  Let~$d\ge3$ be odd and~$n$ an even multiple of~$\frac{d-1}{2}$. Then
  there is a $d$-dimensional dual zonotope~$Z^\ast \subset \RR^d$
  with~$n$ zones and a projection $\pi : \RR^d \rightarrow\RR^2$ 
  such that the image~$\pi(Z^\ast)$ has at least
  $\big(\frac{2n}{d-1}\big)^\frac{d-1}{2}$ vertices.
\end{thm}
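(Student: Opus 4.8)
The plan is to build $Z^*$ as a ``product-like'' construction assembled from $\tfrac{d-1}{2}$ copies of a planar gadget. Recall the classical fact (Goldfarb, Murty; see the cited \cite{Goldfarb94}, \cite{murty80:_comput}) that there is a $2k$-dimensional polytope with $O(k)$ facets — a deformed product of $k$ pentagons/hexagons, essentially a Klee--Minty-type ``cube'' — whose orthogonal projection to a suitable plane is a polygon with $2^{\Omega(k)}$ vertices. The only obstruction to quoting this directly is that such a polytope need not be a dual zonotope: $Z^*$ must be the unit ball of a norm of the form $x\mapsto\sum_i|a_ix|$, equivalently the polar of a zonotope, equivalently a polytope whose normal fan is that of a central hyperplane arrangement. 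So the real content is to realize the exponential-shadow polytope \emph{inside the class of dual zonotopes} while controlling the number of zones.

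First I would fix the planar gadget. Since a dual zonotope in $\RR^2$ is just a centrally symmetric polygon, any centrally symmetric $2m_0$-gon is $Z^*_{A_0}$ for a suitable $2\times m_0$ matrix $A_0$; in particular a regular hexagon ($m_0=3$) or any symmetric polygon with a few vertices is available for free. The idea is that the \emph{Minkowski sum} of zonotopes is again a zonotope (stack the defining matrices), and dually the relevant operation on the $Z^*$ side is the one whose normal fan is the common refinement of the input normal fans — i.e.\ placing the generator vectors of several planar arrangements into mutually skew coordinate $2$-planes of $\RR^d$. Concretely, take $\tfrac{d-1}{2}$ matrices $A_1,\dots,A_{(d-1)/2}$, each of size roughly $\tfrac{2n}{d-1}\times 2$, embed the $j$-th block so that its two columns hit coordinates $2j-1,2j$ (and let the odd $d$-th coordinate carry one extra zone if a homogenizing/technical hyperplane is needed to fix parity and the hypotheses of Section~2 on $A$). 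The resulting matrix $A\in\RR^{n\times d}$ (after the minor parity adjustment) is of full column rank, and $Z^*_A$ has a face structure that is, up to the extra coordinate, the ``join'' of the $\tfrac{d-1}{2}$ planar arrangements.

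Next I would locate the face that survives. Apply Lemma~\ref{lem:N_F} and Lemma~\ref{lem:proj}: choose in each planar block $A_j$ a vertex-edge incidence of its polygon so that, after projecting away the last $d-2$ coordinates, the normals positively span $\RR^{d-2}$ — equivalently, choose a single vertex $v$ of $Z^*_A$ whose $N_v$, truncated appropriately, positively spans the complement of the target plane $\pi(\RR^d)=\RR^2$. Then $v$ survives $\pi$, and more importantly the combinatorics of $\pi(Z^*_A)$ near $\pi(v)$ reproduces a ``staircase'' whose length multiplies over the blocks: each block independently contributes a factor equal to (roughly half) the number of vertices of its polygon, and by choosing each polygon to be a $\big(\tfrac{2n}{d-1}\big)$-gon — or by re-deriving the Klee--Minty deformation directly in the zonotope picture, perturbing the hexagon gadgets so that their projected edge-slopes interleave geometrically — one gets the product lower bound $\big(\tfrac{2n}{d-1}\big)^{(d-1)/2}$ on the number of vertices of $\pi(Z^*)$.

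The main obstacle is precisely the interaction between the ``dual zonotope'' constraint and the projection: a bare product of polygons has the right shadow but is not a dual zonotope, while a careless arrangement-based construction is a dual zonotope but its generic $2$D-shadow might collapse the staircase. I expect the crux to be a careful choice of the slopes of the generator vectors within each planar block — a Klee--Minty / Goldfarb-style geometric progression of slopes — so that (i) the global normal fan is still that of a genuine central arrangement, (ii) Lemma~\ref{lem:proj}'s positive-spanning condition holds at the chosen vertex, and (iii) the projected edges do not overlap, so the $\tfrac{d-1}{2}$ staircases genuinely multiply rather than merely add. Verifying (iii) — that distinct surviving vertices have distinct images, so the count is a true lower bound on vertices of $\pi(Z^*)$ and not just on edges — is the step I would budget the most care for; everything else is bookkeeping with sign vectors and the two projection lemmas.
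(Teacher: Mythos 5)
Your approach is genuinely different from the paper's, and it has a gap that I don't think your sketch resolves.

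The paper's proof of this theorem does \emph{not} try to build the exponential staircase directly inside a dual zonotope. Instead it exploits the observation that \emph{every} $(d-1)$-polytope with at most $n$ facets occurs as a facet of some $d$-dimensional dual zonotope with $n$ zones (Lemma~\ref{lem:P_as_facet}, proved by homogenizing the inequality system $Ax\le b$ to a central arrangement and normalizing so that the facet lands affinely, not just projectively, in $\{x_0=1\}$). One then takes $F$ to be an Amenta--Ziegler deformed product of $\frac{d-1}{2}$ polygons (Theorem~\ref{thm:amenta_ziegler}), which already has a $2$D-projection $\pi_2$ retaining all $\big(\tfrac{2n}{d-1}\big)^{(d-1)/2}$ vertices, realizes it as a facet of $Z^*$, applies $\pi_3=(\mathrm{id}\times\pi_2)$ to get a centrally symmetric $3$-polytope with the big polygon as a facet, and finishes with the elementary Lemma~\ref{lem:proj_to_R2} (a centrally symmetric $3$-polytope with a $k$-gon facet has a $2$D-shadow with $\ge k$ vertices). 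All the hard combinatorics is quoted; what's new is only the facet-realization step and the $3$D intermediate projection.

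Now the gap in your proposal. You place the planar gadget matrices $A_1,\dots,A_{(d-1)/2}$ into mutually skew coordinate $2$-planes and stack them into a block-diagonal $A$. But then
\[
Z^*_A=\Big\{x:\sum_i |a_ix|\le 1\Big\}=\Big\{(x^{(1)},\dots,x^{(m)}):\sum_j \|A_j x^{(j)}\|_1\le 1\Big\},
\]
which is the $\ell_1$-ball of the block norms, i.e.\ the \emph{free sum} of the planar dual zonotopes $Z^*_{A_j}$, not their product. (Equivalently: the primal $Z_A$ is the product of the $Z_{A_j}$'s, hence the polar is the free sum.) Under free sum, vertex counts \emph{add} while facet counts multiply; so $Z^*_A$ has only about $\tfrac{d-1}{2}\cdot\tfrac{2n}{d-1}=n$ vertices in total, and no choice of projection $\pi$, no matter how clever the use of Lemma~\ref{lem:proj}, can make $\pi(Z^*_A)$ have more than $n$ vertices. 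The ``staircase whose length multiplies over the blocks'' simply does not exist in this polytope. You anticipate this (``a bare product of polygons has the right shadow but is not a dual zonotope'') and propose to rescue it with a Goldfarb-style perturbation of the generator slopes, but that perturbation is the whole content and is not worked out; carried out carefully it is essentially the much more involved Section~\ref{sec:main} construction (which also gives the sharper $\Omega(n^{d-1})$). For the present theorem the paper deliberately sidesteps this by the facet-realization + intermediate $3$D projection route, which your sketch is missing.
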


\noindent
Here is a rough sketch of the construction. 
\begin{enumerate}[(1)]
\item According to Amenta \& Ziegler~\cite[Theorem~5.2]{amenta_ziegler:DP} there are
  $(d-1)$-polytopes with~$n$ facets and exponentially many vertices such that the projection
  \mbox{$\pi_2:\RR^{d-1}\rightarrow\RR^2$} to the first two coordinates preserves all the vertices
  and thus yields a ``large'' polygon.
\item
We construct a $d$-dimensional dual zonotope~$Z^\ast$ with~$n$ zones 
that has such a $(d-1)$-polytope as a facet~$F$.
\item
The extension of $\pi_2$ to a projection
\[
\pi_3:\RR\times\RR^{d-1}\rightarrow\RR^3,\ (x_0,x)\mapsto(x_0,\pi_2(x))
\]
maps~$Z^\ast$ to a centrally symmetric $3$-polytope~$P$ with a 
\emph{large} polygon as a facet.
$P$ has a projection to~$\RR^2$ that preserves many vertices.
\end{enumerate}

\noindent
In the following we give a few details to enhance this sketch.

\subsubsection*{Some details for~{\rm(1)}:} 
Here is the exact result by Amenta \& Ziegler, which sums up
previous constructions by Goldfarb \cite{Goldfarb94} and Murty
\cite{murty80:_comput}.

\begin{thm}[{Amenta \& Ziegler~\cite{amenta_ziegler:DP}}]
\label{thm:amenta_ziegler}
Let~$d$ be odd and~$n$ an even multiple of~$\frac{d-1}{2}$. Then there  
is a $(d-1)$-polytope~$F\subset\RR^{d-1}$ with~$n$ facets
and~$\big(\frac{2n}{d-1} \big)^\frac{d-1}{2}$ vertices such that the
projection $\pi_2 : \RR^{d-1} \rightarrow \RR^2$ to the
first two coordinates preserves all vertices of~$F$. The polytope~$F$
is combinatorially equivalent to a $\big(\frac{d-1}{2}\big)$-fold product
of $\big( \frac{2n}{d-1}\big)$-gons.
\end{thm}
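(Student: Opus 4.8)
The plan is to realize $F$ as a suitable \emph{deformed product} of $k:=\tfrac{d-1}{2}$ $N$-gons, with $N:=\tfrac{2n}{d-1}=\tfrac nk$, and to certify vertex survival through the projection lemma. Combinatorially there is nothing left to prove: the $k$-fold product $C_N^{\times k}$ of $N$-gons is a simple $(d-1)$-polytope with exactly $kN=n$ facets and $N^{k}=\bigl(\tfrac{2n}{d-1}\bigr)^{(d-1)/2}$ vertices, and every polytope combinatorially equivalent to $C_N^{\times k}$ has these face numbers. So the whole content is to exhibit \emph{one} geometric realization $F\subset\RR^{d-1}$ of $C_N^{\times k}$ for which the coordinate projection $\pi_2$ to the first two coordinates keeps all $N^{k}$ vertices. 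Applying Lemma~\ref{lem:proj} to the vertices of $F$ (with ``$k$''${}=2$ there and ambient dimension $d-1$), it suffices to arrange that at every vertex $v$ the $d-1$ outer facet normals at $v$, truncated to their last $d-3$ coordinates, positively span $\RR^{d-3}$. Since $v$ is a simple vertex there are exactly $d-1=2k$ such normals, living in a space of dimension $2k-2$; the resulting slack of two vectors is precisely the room that the two facets of one, ``forgotten'', polygon factor will have to supply once they have been tilted out of their own coordinate plane. (Under the divisibility hypothesis $N$ is a positive even integer; the case $N=2$, where the factors degenerate to segments and $F$ is a deformed cube, is the classical Goldfarb/Klee--Minty situation and is handled identically, and $k=1$ is the trivial case $d=3$.)

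Next I would build $F$ by attaching the $k$ $N$-gons one at a time. Starting from a generic planar $N$-gon, I pass from a partial deformed product $G\subset\RR^{2j}$ to $G'\subset\RR^{2j+2}$ by gluing on a fresh $N$-gon $C_N=\{y\in\RR^2:Uy\le\mathbf 1\}$ whose $N$ facet inequalities have their right-hand sides shifted by a small generic linear function of the coordinates already present, i.e.\ $G'=\{(x,y):x\in G,\ Uy\le\mathbf 1+Ex\}$ for a fixed generic shear matrix $E$ scaled by a parameter to be fixed last. Two properties must be maintained. First, combinatorial equivalence $G'\cong G\times C_N$: this holds as soon as the scale is small enough, because $G\times C_N$ is simple and sufficiently small perturbations of the right-hand sides of a simple polytope's facet inequalities do not change its face lattice; iterating gives $F\cong C_N^{\times k}$. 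Second, the vertex-survival criterion of the first paragraph, which is the very reason the shears are there. The coordinates are organized so that the $2$-plane retained by $\pi_2$ is the plane of the $N$-gon attached \emph{last}; that polygon's facets are sheared by \emph{all} of the other coordinates, so after the truncation (which deletes exactly coordinates $1$ and $2$) each of its two facet normals at any vertex has a nonzero component in \emph{every} surviving coordinate block. The scales are chosen in a strict hierarchy $\delta_2\gg\delta_3\gg\cdots\gg\delta_k>0$, each $\delta_j$ small enough to preserve combinatorics at its step, and then small enough not to disturb the spanning conditions already secured at the larger scales.

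The heart of the matter --- and the step I expect to be the main obstacle --- is to show that a single coherent choice of the shears and of the scale hierarchy makes the truncated normals positively span at \emph{all} $N^{k}$ vertices simultaneously. Fix a vertex $(w_1,\dots,w_k)$. In the coordinate block belonging to the $i$-th surviving polygon, the truncated normals consist, at the dominant scale, of the usual pair of outer edge-normals of that polygon at $w_i$ --- a pointed, hence by itself deficient, planar cone whose opening is the exterior angle at $w_i$ --- together with, at strictly smaller scales, the components in that block of the sheared normals of the polygons attached later, the last-attached one contributing in every block. One must choose the shear directions so that, whatever the vertices $w_1,\dots,w_k$ are, in each block at least one of these lower-order contributions points into the cone antipodal to the pointed cone there; this can be engineered by placing the polygon factors in ``incommensurate'' angular position (for instance as regular $N$-gons in suitably irrational relative rotation), so that no exterior-angle sector of one factor contains both edge-normals belonging to a single vertex of another. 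Passing from ``each block is handled'' to ``the whole family of $2k$ truncated normals lies in no closed halfspace, i.e.\ positively spans $\RR^{2k-2}$'' is then carried out by an induction on the number of factors that exploits the separation of scales: a normal's components outside its ``own'' block are negligible against what has already been arranged elsewhere, so the product structure of the normal fan can be recovered one factor at a time and a would-be separating functional can be ruled out by localizing to the appropriate block. Making this case analysis over the normal fan of the deformed product precise, and writing down one explicit admissible family of shears, is the technical core; granting it, Lemma~\ref{lem:proj} yields that $\pi_2$ preserves all $N^{k}$ vertices of $F$, the face-count of the first paragraph supplies the claimed $n$ and $\bigl(\tfrac{2n}{d-1}\bigr)^{(d-1)/2}$, and the combinatorial equivalence $F\cong C_N^{\times k}$ established along the way is exactly the last assertion of the theorem.
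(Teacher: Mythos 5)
This theorem is imported: the paper gives no proof of it, citing Amenta--Ziegler and remarking only that explicit coordinates can be obtained by adapting the deformed products of polygons from \cite{ziegler:PPP}, \cite{Z102}. Your overall strategy --- a deformed product of $\tfrac{d-1}{2}$ $N$-gons built factor by factor with a hierarchy of shear scales, certified by Lemma~\ref{lem:proj} --- is indeed the strategy of the cited sources, and your face counts and the reduction to a positive-spanning condition at each vertex are correct.

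However, the ``technical core'' you defer is not merely unexecuted; the specific mechanism you sketch for it fails. You propose to use \emph{regular} $N$-gons in generic relative rotation and to argue blockwise that, at every vertex, some lower-order shear contribution lands in the cone antipodal to the dominant pointed normal cone of that block. A counting argument kills this: in a fixed block the available lower-order vectors at a vertex are the shear components of the active facets of the later factors, a set of at most $d-3$ vectors determined by the \emph{other} coordinates of the vertex, while the $N$ antipodal cones $-\operatorname{int}\operatorname{cone}(u_1(w),u_2(w))$, as $w$ runs over the vertices of that block's polygon, are pairwise disjoint. For $N>d-3$ some vertex of that polygon is missed, and since its choice is independent of the rest of the vertex, the criterion fails there. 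Worse, for regular $N$-gons with $N\ge4$ no choice of shears can work at all: a functional $\ell$ is nonnegative on the dominant pair iff $\ell$ lies in the dual cone $C(w)^{*}$, a sector of angle $\pi-\tfrac{2\pi}{N}$, and these sectors cover every direction as $w$ varies; since two shear vectors always admit some nonzero functional nonnegative on both, some vertex's four truncated normals lie in a closed halfplane. The actual constructions avoid this by taking the polygon factors extremely \emph{non}-generic --- ``flat'' Goldfarb/Klee--Minty-type $N$-gons whose edge normals, with one exception, cluster in an arbitrarily narrow cone --- so that the shear vectors can \emph{straddle} the (narrow) antipodal direction rather than lie inside each antipodal cone; positive spanning then comes from a straddling/dual-cone argument, not from antipodal containment. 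So the gap is concrete: your choice of polygon factors and your local sufficient condition both have to be replaced before the induction over blocks can be run.
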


  Explicit matrix descriptions of deformed products of $n$-gons
  with ``large'' $4$-dimensional projections are given in~\cite{ziegler:PPP}
  \cite{Z102}. 
  These can easily be adapted (indeed, simplified) to yield explicit
  coordinates for the polytopes of Theorem~\ref{thm:amenta_ziegler}.

\subsubsection*{Some details for~{\rm(2)}:} 
We have to construct a dual zonotope~$Z^\ast$ with~$F$ as a facet.

\begin{lem} 
  \label{lem:P_as_facet} 
  Given a $(d-1)$-polytope~$F$ with~$n$ facets, there is a $d$-dimensional dual zonotope~$Z^\ast$
  with~$n$ zones that has a facet affinely equivalent to~$F$.
\end{lem}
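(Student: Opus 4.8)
The plan is to realize $F$ as a facet of $Z^*$ by choosing the matrix $A$ so that one particular region of the arrangement $\centralized\cA$ has a facet description matching $F$. Recall from Section~\ref{sec:ArrangementsAndZonotopes} that a facet of $Z^*$ corresponds to a sign vector $\sigma$ of a region of $\centralized\cA$, and by Lemma~\ref{lem:N_F} the outer normals of the facets of $Z^*$ meeting that facet are the vectors $\sigma' A$ where $\sigma'$ ranges over sign vectors of regions adjacent to the chosen one. So the combinatorics of the facet of $Z^*$ is governed by how the hyperplanes $\centralized H_j$ slice the cone over that region. The idea is to put $F$ (given by an inequality system $Bx \le \allOnes$ with $B \in \RR^{n\times(d-1)}$, say with $F$ full-dimensional and $\allZeros$ in its interior) into the hyperplane $x_0 = 1$ of $\RR^d$, and to build an arrangement of $n$ hyperplanes through the origin whose trace on $x_0 = 1$ is exactly the affine arrangement bounding $F$.

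\textbf{Key steps.} First, normalize: translate $F$ so that the origin of $\RR^{d-1}$ lies in its interior, and write $F = \{y\in\RR^{d-1} : b_j y \le 1,\ j=1,\dots,n\}$, where the $b_j$ are the (scaled) facet normals, no two parallel and none a multiple of a coordinate vector after a generic linear change of coordinates. Second, set $a_j = (-1, b_j) \in \RR^d$, i.e.\ let $A$ have rows $a_j$; the linear hyperplane $\centralized H_j = \{a_j x = 0\}$ intersects $x_0 = 1$ in exactly $\{(1,y) : b_j y = 1\}$, the affine hyperplane carrying the $j$-th facet of $F$. Thus the full-dimensional cell of $\centralized\cA$ whose intersection with $x_0=1$ is $F$ has sign vector $\sigma = ({-},\dots,{-})$ (since $a_j(1,y) = -1 + b_j y < 0$ on the interior of $F$), and the cone over $F$ is exactly this cell. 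Third, observe that the facet of $Z^*$ indexed by $\sigma = \allZeros^0$-free sign vector $({-},\dots,{-})$ is $\{x : \sigma A x \le 1,\ \text{with equality for this }\sigma\} \cap Z^*$, and Lemma~\ref{lem:N_F} together with the adjacency structure of $\centralized\cA$ shows that the face poset of this facet is precisely the face poset of the cone over $F$ truncated by the hyperplane $\sigma A x = 1$ — which is an affine copy of $F$ itself. A final normalization step (checking that none of the $a_j$ is parallel to $(1,0,\dots,0)$ or to another $a_j$, which holds after a generic choice of coordinates on $F$, and that $A$ has rank $d$, which holds because the $b_j$ span $\RR^{d-1}$) puts us inside the standing hypotheses of Section~\ref{sec:hyperplane_arrangements}.

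\textbf{Main obstacle.} The delicate point is not the construction of $A$ but the verification that the resulting facet of $Z^*$ is \emph{affinely} — not merely combinatorially — equivalent to $F$, and in particular that no \emph{extra} facets of $Z^*$ cut into this region. Concretely, one must check that for the all-minus region, the tight inequalities of $Z^*$ at the corresponding facet are exactly $\{b_j y \le 1\}$ pulled back, i.e.\ that the relevant $\sigma' A$ in Lemma~\ref{lem:N_F} recover precisely the normals $b_j$ up to the affine identification $x \mapsto (1,y)$; this is where the hypothesis that $F$ is bounded with the origin in its interior is used, since it guarantees the all-minus cell is bounded after truncation and that all $n$ hyperplanes genuinely bound it. I would handle this by writing the affine map explicitly: the hyperplane $\sigma A x = \sum_j (-a_j) x = 1$ is $\{x_0\sum_j 1 - \dots\}$, solve for $x_0$, and substitute to recover $F$ in the remaining coordinates. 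The remaining steps — genericity of the coordinate change, full rank, no parallel rows — are routine perturbation arguments.
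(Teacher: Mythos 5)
Your construction is the same in spirit as the paper's (cone over $F$ placed in the hyperplane $x_0=1$, linear arrangement from the extended inequality matrix, dual zonotope spanning the fan via \cite[Cor.~7.18]{ziegler:LOP}), but there is a genuine gap precisely at the point you flag as the ``main obstacle'': your normalization does not deliver \emph{affine} equivalence, only \emph{projective} equivalence, and the proposed fix (``solve for $x_0$ and substitute'') does not close the gap.

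Concretely, with $a_j=(-1,b_j)$ the facet of $Z^*$ coming from the all-minus cell lies in the hyperplane $\sigma A\,x=1$, i.e.\ $n x_0-\bigl(\sum_j b_j\bigr)y=1$ with $c:=\sum_j b_j$. Intersecting with the cone $\cone(\{1\}\times F)$ and writing points of the cone as $\lambda(1,z)$ with $z\in F$ gives $\lambda=1/(n-cz)$, so the facet is $\{\tfrac{1}{n-cz}(1,z):z\in F\}$. The map $z\mapsto\tfrac{1}{n-cz}(1,z)$ is affine only when $c=0$; for $c\neq 0$ it is a genuine projective transformation, and the facet is in general \emph{not} affinely equivalent to $F$ (affine maps preserve barycentric coordinates of vertices, projective ones do not). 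Normalizing the right-hand sides to all equal $1$ is not the right normalization: what is needed is that the rows $a_j$ sum to a positive multiple of $e_0$, i.e.\ $\sum_j b_j=0$, and this is \emph{not} a consequence of your setup nor of a generic choice of coordinates. The paper achieves it by a different (non-generic) rescaling: since the facet normals $b_j$ positively span $\RR^{d-1}$, there are $\lambda_j>0$ with $\sum_j\lambda_j b_j=0$; multiplying the $j$-th inequality by a suitable positive multiple of $\lambda_j$ forces $\sum_j a_j\in\RR_{>0}e_0$, which makes the supporting hyperplane of the distinguished facet parallel to $\{x_0=0\}$ and hence the facet an affine (indeed scalar) copy of $F$. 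Your proposal is missing exactly this scaling, and the remaining ``perturbation'' remarks do not supply it, since $\sum_j b_j=0$ is a non-generic condition.
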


\begin{proof}
  Let $\{x\in\RR^{d-1}:A x\le b\}$ 
  be an inequality description of~$F$,
  and let $(-b_i,A_i)$ denote the $i$-th row of the 
  matrix $(-b,A)\in\RR^{n\times d}$.

  The $n$ hyperplanes $H_i=\{x\in\RR^d:(-b_i,A_i)x=0\}$ yield a linear arrangement
  of~$n$ hyperplanes in~$\RR^d$, which may also be viewed as a fan (polyhedral complex of
  cones).  According to~\cite[Cor.~7.18]{ziegler:LOP} the fan is polytopal, and the dual $Z^\ast$ of
  the zonotope $Z$ generated by the vectors $(-b_i,A_i)$ spans the fan.  

  The resulting dual zonotope $Z^\ast$ has a facet that is \emph{projectively equivalent} to~$F$;
  however, the construction does not yet yield a facet that is affinely equivalent to~$F$.  In order
  to get this, we construct~$Z^\ast$ such that the hyperplane spanned by $F$ is $x_0 = 1$.  This is
  equivalent to constructing~$Z$ such that the vertex~$v_F$ corresponding to~$F$ is~$e_0$. Therefore
  we have to normalize the inequality description of $F$ such that
\[
\sum_{i=1}^n (-b_i,A_i) = (1,0,\ldots,0).
\]
The row vectors of~$A$ positively span~$\RR^{d-1}$ and are linearly dependent, hence there is a
linear combination of the row vectors of~$A$ with coefficients~$\lambda_i > 0$, $i = 1, \dots, n$,
which sums to~$0$. Thus if we multiply the $i$-th facet-defining inequality for $F$, corresponding
to the row vector $(-b_i,A_i)$, by
\[
  \frac{-\lambda_i}{\sum\limits_{j=1}^n \lambda_j \: b_j},
\]
  then we obtain the desired normalization of~$A$ and~$b$.
\end{proof}

\subsubsection*{Some details for~{\rm(3)}:}
The following simple lemma provides the last part of our proof;
it is illustrated in Figure~\ref{fig1}.

\begin{lem} \label{lem:proj_to_R2}
  Let~$P$ be a centrally symmetric $3$-dimensional polytope and
  let~$G\subset P$ be a $k$-gon facet. Then there exists a projection
  $\pi_G: \RR^3 \rightarrow \RR^2$ such that~$\pi_G(P)$ is a polygon
  with at least~$k$ vertices.
\end{lem}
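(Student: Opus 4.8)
The plan is to produce a (generic) direction $u\in\RR^3$ such that at least $\lceil k/2\rceil$ of the $k$ vertices of $G$, and automatically their antipodes on the opposite facet $-G$, all \emph{survive} the projection $\pi_u$ with kernel $\RR u$; by Lemma~\ref{lem:proj} this forces $\pi_u(P)$ to have at least $2\lceil k/2\rceil\ge k$ vertices.

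\emph{Setup and reduction.} After a translation we may assume $P=-P$; let $\nu$ be the unit outer normal of $G$, so $G=P\cap\{x:\nu x=c\}$ with $c>0$ (since $0\in\operatorname{int}P$) and $-G=P\cap\{x:\nu x=-c\}$ is the antipodal facet. Write $v_1,\dots,v_k$ for the vertices of $G$, and $C_v$ for the normal cone of a vertex $v$, i.e.\ the cone generated by the rows of $N_v$. Two observations: (a) since $C_{-v}=-C_v$, a linear plane $u^\perp$ meets $\operatorname{int}C_{v_i}$ iff it meets $\operatorname{int}C_{-v_i}$; (b) if $u^\perp$ meets $\operatorname{int}C_v$, then the rows of $N_v$ have inner products with $u$ of both signs, so by Lemma~\ref{lem:proj} (with $d=3$, $k=2$, after rotating $u$ to the last coordinate) the vertex $v$ survives $\pi_u$. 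Because $G$ and $-G$ lie in distinct hyperplanes, $v_1,\dots,v_k,-v_1,\dots,-v_k$ are $2k$ distinct vertices, so it suffices to find a $u$ with $u^\perp\cap\operatorname{int}C_{v_i}\ne\emptyset$ for at least $\lceil k/2\rceil$ indices $i$, while keeping $u$ not parallel to any difference of two vertices of $P$ (so that $\pi_u$ is injective on the vertex set). The latter excludes only finitely many directions and the former is an open condition, so it is enough to arrange the former.

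\emph{The planar core.} The cones $C_{v_1},\dots,C_{v_k}$ share the ray $\RR_{\ge0}\nu$ as a common $1$-face and are arranged cyclically around it, their union being a neighbourhood of $\nu$. Slicing with the affine plane $\Pi=\{x:\nu x=1\}\ni\nu$, the convex sets $D_i=C_{v_i}\cap\Pi$ have $\nu$ as a vertex, with angular openings $w_1,\dots,w_k$ there summing to $2\pi$, and for $\epsilon$ small a line $\ell\subset\Pi$ at distance $\epsilon$ from $\nu$ in a direction $\phi$ meets $\operatorname{int}D_i$ roughly when the open angular sector of $D_i$ at $\nu$ meets the open half-turn of directions centred at $\phi$. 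Now the combinatorial point: for generic $\beta$ one of the two complementary open half-turns $(\beta,\beta+\pi)$, $(\beta+\pi,\beta+2\pi)$ meets at least $\lceil k/2\rceil$ of the $k$ sectors. Indeed, a sector is missed by a half-turn only if it lies entirely in the complementary closed half-turn; each of the $k$ sectors lies in at most one of the two complementary closed half-turns unless it contains $\beta$ or $\beta+\pi$ in its interior, and the sectors doing so account for the deficit in such a way that the two half-turns together miss at most $k-1$ sectors, so one of them misses at most $\lfloor(k-1)/2\rfloor=\lceil k/2\rceil-1$. Choosing $\phi$ accordingly and $\epsilon$ small, the plane $L:=\operatorname{aff}(\ell\cup\{0\})$ through the origin satisfies $L\cap\operatorname{int}C_{v_i}\ne\emptyset$ (as $\operatorname{int}D_i\subset\operatorname{int}C_{v_i}$) for at least $\lceil k/2\rceil$ indices $i$; taking $u\perp L$ generic within the open set of good directions then finishes the proof.

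\emph{Main obstacle.} The delicate step is the passage from ``the open sector of $D_i$ meets the open half-turn'' to ``$\ell$ at distance $\epsilon$ actually meets $\operatorname{int}D_i$'': $D_i$ may stretch far from $\nu$, and a sector meeting the half-turn only near one of its endpoints can be missed by $\ell$ for every $\epsilon>0$. This costs at most the two sectors cut by the extreme directions $\phi\pm\tfrac{\pi}{2}$, which is absorbed by shrinking the half-turn slightly (equivalently shrinking $\epsilon$) at the price of only a harmless additive constant -- and the count above already delivers a little more than $k/2$ sectors, so there is room. The remaining genericity bookkeeping (for injectivity of $\pi_u$ on vertices, and to stay inside the open set of admissible directions) is routine.
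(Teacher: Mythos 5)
Your proof is correct, but it takes a genuinely different and substantially longer route than the paper's. You argue via normal cones: a vertex $v$ survives $\pi_u$ iff $u^\perp$ meets $\operatorname{int} C_v$ (your step (b), which correctly unwinds Lemma~\ref{lem:proj} with $d=3$, $k=2$), and you then slice the $k$ normal cones $C_{v_1},\dots,C_{v_k}$ by the affine plane $\Pi=\{\nu^{\transp}x=1\}$ and run a half-turn/pigeonhole argument on the resulting angular sectors at $\nu$ to find a plane $u^\perp$ meeting at least $\lceil k/2\rceil$ of them; central symmetry ($C_{-v}=-C_v$) then doubles the count. The paper does the doubling the same way but chooses the projection direction far more directly: take a generic direction in the plane of $G$, perturb it by $\pm\epsilon n_G$, and observe that for one of the two signs the shadow boundary picks up an arc of $\partial G$ with at least $\lceil k/2\rceil$ edges together with the centrally symmetric arc of $\partial(-G)$. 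Same underlying idea (half of $G$ plus the symmetric half of $-G$), but realized without any excursion into normal cones and in about five lines.

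Two comments on your ``main obstacle'' paragraph. First, the obstacle is not really there: if $\operatorname{relint} D_i$ meets the open half-turn $(\phi-\tfrac{\pi}{2},\phi+\tfrac{\pi}{2})$, then $M_i:=\sup_{p\in D_i}\langle p-\nu,e_\phi\rangle>0$, and since only finitely many $D_i$ are in play, any $\epsilon$ with $0<\epsilon<\min_i M_i$ makes the line $\ell=\{\langle\,\cdot\,-\nu,e_\phi\rangle=\epsilon\}$ meet \emph{every} one of those $\operatorname{relint}D_i$. A sector that meets the open half-turn only near an endpoint is still hit for all sufficiently small $\epsilon$; the claim that it ``can be missed by $\ell$ for every $\epsilon>0$'' is false. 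Second, your proposed patch would not save you even if it were needed: shrinking the half-turn and losing two sectors leaves only $\lceil k/2\rceil-2$ surviving vertices of $G$, and after doubling this falls short of $k$; your combinatorial count delivers $\lfloor k/2\rfloor+1$ sectors (not ``a little more than $k/2$'' with two to spare), so there is no room for a two-sector loss. Fortunately no sector needs to be sacrificed. With that paragraph replaced by the uniform choice of small $\epsilon$ above, the argument is complete and correct.
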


\begin{figure}[h!]
  \centering
  \includegraphics[width=.51\textwidth]{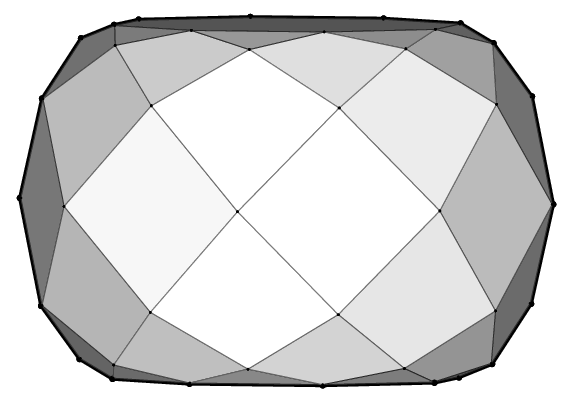}
  \hspace{1cm}
  \includegraphics[width=.39\textwidth]{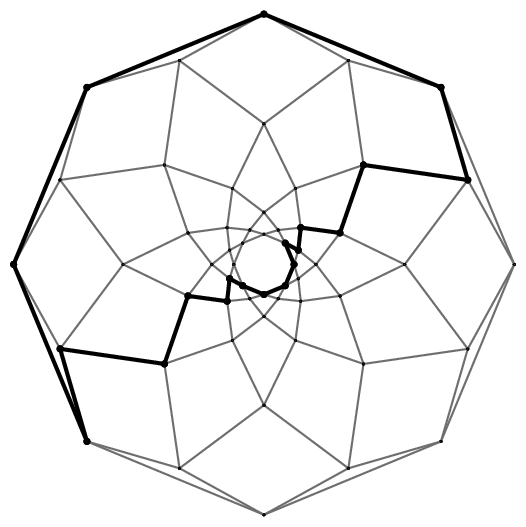}
  \caption{Shadow boundary of a centrally symmetric $3$-polytope, on the right displayed as its
    Schlegel diagram.}
  \label{fig1}
\end{figure}

\begin{proof}
  Since~$P$ is centrally symmetric, there exists a copy~$G'$ of~$G$ as a facet of~$P$ opposite and
  parallel to~$G$. Consider a projection~$\pi$ parallel to~$G$ (and to~$G'$) but otherwise generic
  and let~$n_G$ be the normal vector of the plane defining~$G$. If we perturb~$\pi$ by adding~$\pm
  \epsilon n_G$,~$\epsilon>0$, to the projection direction of~$\pi$, parts of~$\partial G$
  and~$\partial G'$ appear on the shadow boundary. Since~$P$ is centrally symmetric, the parts
  of~$\partial G$ and~$\partial G'$ appearing on the shadow boundary are the same.  Therefore
  perturbing~$\pi$ either by~$+\epsilon n_G$ or by~$-\epsilon n_G$ yields a projection~$\pi_G$ such
  that~$\pi_G(P)$ is a polygon with at least~$k$ vertices.
\end{proof}

\section{Dual Zonotopes with 2D-Shadows of Size 
$\Omega\left(n^{d-1}\right)$}
\label{sec:main}

In this section we prove our main result, Theorem~\refDual, in the
following version.

\begin{thm}\label{thm:main}
  For any $d\geq 2$ there is a $d$-dimensional dual zonotope~$Z^*$ on~$n(d-1)$ 
  zones which has a $2$D-shadow with $\Omega(n^{d-1})$ vertices.
\end{thm}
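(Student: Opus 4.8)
The goal is an optimal $\Omega(n^{d-1})$ lower bound matching the trivial upper bound, so the exponential-but-suboptimal route of Section~\ref{sec:witte_ziegler} (pushing one big polygon facet through the projection) must be replaced by a construction in which \emph{many} low-dimensional faces survive the projection to $\RR^2$. The natural model to imitate is the Ukrainian Easter egg for $d=3$, where a careful choice of $n$ generators in $\RR^3$ produces a 2D-cut meeting $\Omega(n^2)$ faces. The plan is to build the $d$-dimensional dual zonotope as a kind of product/iterated construction: take $d-1$ groups of $n$ zones each, where group $k$ contributes generators lying very close to a coordinate 2-plane, arranged so that within each group one sees roughly $n$ sign changes, and the groups are ``independent'' enough that the induced subdivision of the relevant 2-plane is (combinatorially) a grid-like arrangement with $\Omega(n^{d-1})$ cells/vertices. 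Concretely, one picks a matrix $A$ whose rows are partitioned into blocks $A^{(1)},\dots,A^{(d-1)}$, each block a perturbation of vectors spanning a fixed 2-flat $\RR e_0 + \RR e_k$, chosen on a moment-like curve so that the affine arrangement restricted to that 2-flat has the maximal $\Theta(n^2)$ complexity as in the $d=3$ case.

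The key technical tool is Lemma~\ref{lem:N_F} together with the projection Lemma~\ref{lem:proj}: a face $F$ of $Z^*$ with sign vector $\sigma(F)$ survives the projection $\pi_2$ to the first two coordinates iff the vectors $\sigma A$, over all sign-completions $\sigma$ of $\sigma(F)$, truncated to the last $d-2$ coordinates, positively span $\RR^{d-2}$. So the first main step is to identify a large collection of sign vectors $\sigma(F) \in s_A(\RR^d)$ — these correspond to faces of the arrangement $\widehat\cA$ — of the right codimension (namely, faces whose images under $\pi_2$ are vertices of $\pi_2(Z^*)$, i.e.\ $F$ should be an edge of $Z^*$, hence a $1$-face, hence a sign vector with exactly $d-1$ zero entries, one ``coming from'' each block). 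The second main step is the linear-algebra verification that for each such $\sigma(F)$ the positive-spanning condition holds; this is where the block structure pays off, because the zero entries are distributed one per block and within block $k$ the two sign-completions $\pm$ give vectors differing essentially in the $e_k$-direction, so collectively the completions sweep out all of $\RR^{d-2}$ (after truncation) with positive coefficients. The third step is a counting argument: the number of such surviving edges is the number of vertices of the grid-like arrangement obtained by superimposing the $d-1$ blocks, which by genericity of the perturbation is $\Omega(n^{d-1})$, and these map to distinct vertices of $\pi_2(Z^*)$.

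The main obstacle I anticipate is the simultaneous control of two competing requirements: the generators within each block must be ``flat'' enough (close enough to their 2-flat) that the restricted arrangement attains quadratic complexity and that the surviving-face analysis decouples across blocks, yet the blocks must be jointly in sufficiently general position that the $\Omega(n^{d-1})$ faces are genuinely distinct and their projections are distinct vertices — and one must check the positive-spanning condition survives the perturbation. I expect this is handled by choosing the perturbations in a hierarchy of scales $\epsilon_1 \gg \epsilon_2 \gg \cdots \gg \epsilon_{d-1}$ (a standard ``lexicographic'' or ``infinitesimal product'' trick, cf.\ the deformed-product constructions of \cite{ziegler:PPP}, \cite{Z102}), so that each block's contribution to the truncated normal vectors dominates the next, making the positive-span condition a block-diagonal-ish statement that follows inductively from the $d=3$ case. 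A clean way to organize this is to argue by induction on $d$: given the construction for $d-1$ zones-per-group in dimension $d-1$, lift it by adjoining one more block of $n$ nearly-coplanar generators in the new coordinate direction and verify that edges of the old zonotope, crossed with the $\sim n$ sign changes of the new block, yield the claimed $n$-fold increase in surviving edges.
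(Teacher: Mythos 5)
Your proposal is essentially the same as the paper's construction: $d-1$ blocks of $n$ nearly-coplanar generators (block $i$ living essentially in the $\RR e_0 + \RR e_i + \RR e_{i+1}$ flat) with a geometric hierarchy of scales $\epsilon_i = \alpha^{i-1}$, combined with Lemma~\ref{lem:N_F} and the projection Lemma~\ref{lem:proj} to show that the relevant faces survive $\pi_2$ because the $2^{d-2}$ sign-completions land in all orthants of $\RR^{d-2}$. One small correction: the surviving $1$-faces of $Z^*$ have $d-2$ zero entries (one per block for blocks $1,\dots,d-2$), not $d-1$; the paper first identifies vertices of the arrangement with $d-1$ zeros and then produces edges by replacing the zero of the last block with a sign.
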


We define a dual zonotope~$Z^*$ and examine its crucial properties. These are then summarized in
Theorem~\ref{thm:main_2}, which in particular implies Theorem~\ref{thm:main}.
Figure~\ref{fig:3d-easteregg} displays a 3-dimensional example, Figure~\ref{fig:4d-easteregg} a 4-dimensional
example of our construction.

\begin{figure}[h!]\centering
  \includegraphics[width=.453\textwidth]{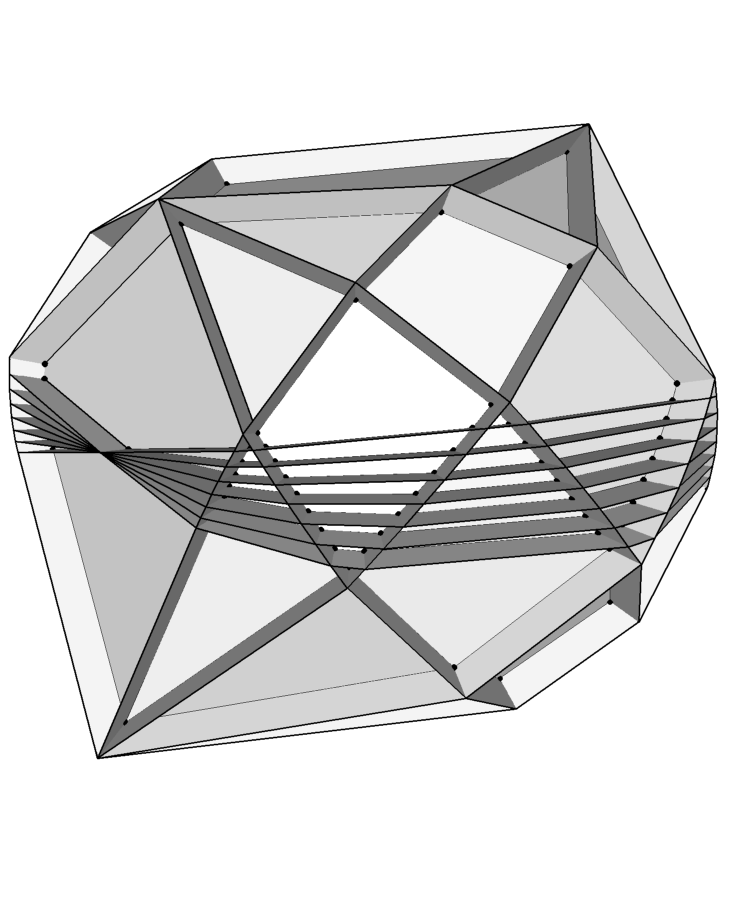}
  \hspace{1cm}
  \includegraphics[width=.447\textwidth]{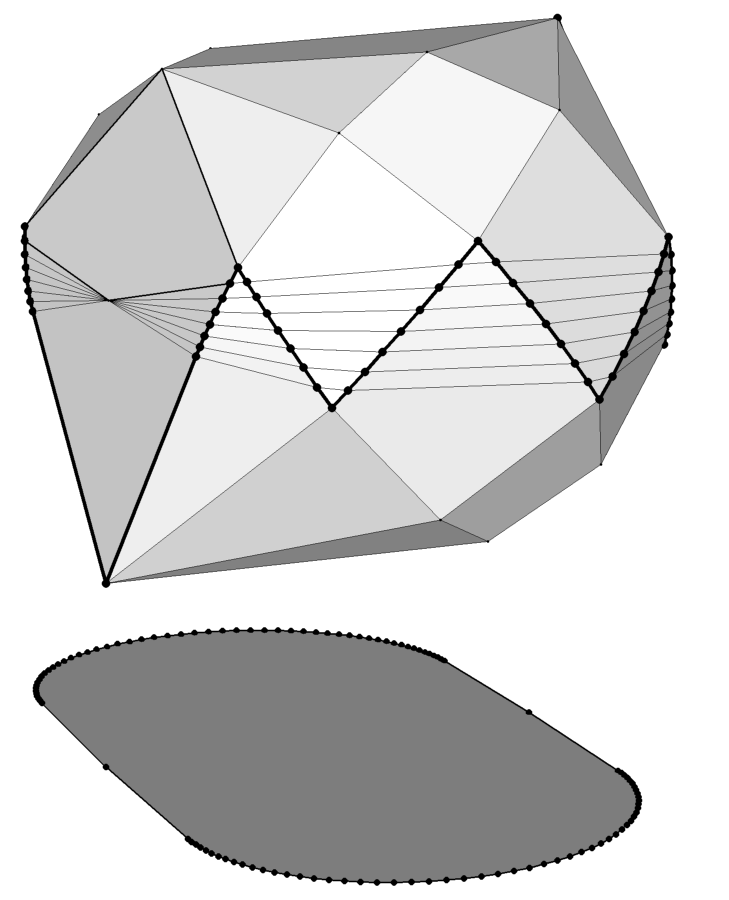}
  \caption{A dual 3-zonotope with quadratic 2D-shadow, on the left with the corresponding linear
    arrangement and on the right with its 2D-shadow.
    \label{fig:3d-easteregg}}
\end{figure}

\subsection{Geometric intuition}

Before starting with the formalism for the proof, which will be rather algebraic, here is a geometric
intuition for an inductive construction of~$Z^*=Z^*_d\subset\RR^d$, 
a $d$-dimensional zonotope on~$n(d-1)$ zones with a 2D-shadow of size~$\Omega(n^{d-1})$ when projected to the
first two coordinates.
For $d=2$ any centrally-symmetric $2n$-gon (i.e., a $2$-dimensional zonotope with $n$ zones) provides
such a dual zonotope~$Z^*_2$. The corresponding affine hyperplane arrangement~$\cA_2\subset\RR^1$
consists of~$n$ distinct points.

We derive a hyperplane arrangement~$\cA_3'\subset\RR^2$ from~$\cA_2$ by first
considering~$\cA_2\times\RR$, and then ``tilting'' the hyperplanes in~$\cA_2\times\RR$. The
hyperplanes in~$\cA_2\times\RR$ are ordered with respect to their intersections with the $x_1$-axis.
The hyperplanes in~$\cA_2\times\RR$ are tilted alternatingly in
$x_2$-direction as in Figure~\ref{fig:geometric_intuition} (left): Each black vertex of~$\cA_2$
corresponds to a north-east line and each white vertex becomes a north-west line of the arrangement
$\cA_3'$. For each vertex in the 2D-shadow of~$Z^*_2$ we obtain an edge in the 2D-shadow of the dual
3-zonotope~${Z^*_3}'$ corresponding to~$\cA_3'$.  Now~$\cA_3\subset\RR^2$ is constructed
from~$\cA_3'$ by adding a set of~$n$ parallel hyperplanes to~$\cA_3'$, all of them close
to the $x_1$-axis, and each intersecting each edge of
the 2D-shadow of~${Z^*_3}'$; see Figure~\ref{fig:geometric_intuition} (right).

\begin{figure}[h!]\centering
  \includegraphics[width=.45\textwidth]{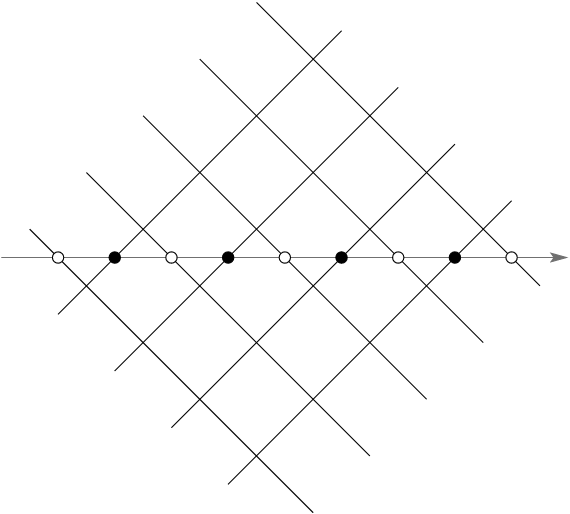}
  \hspace{1cm}
  \includegraphics[width=.45\textwidth]{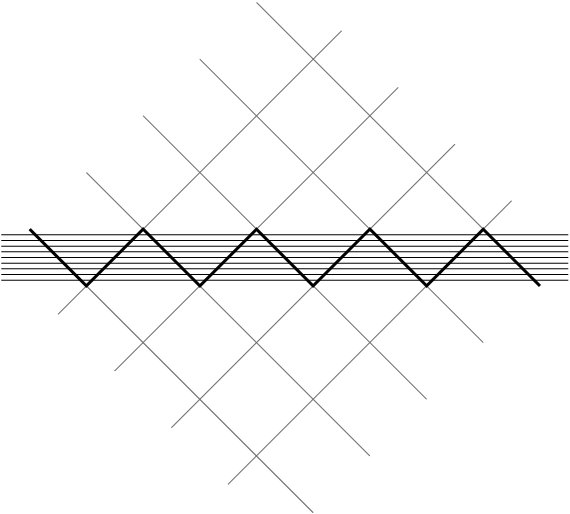}
  \caption{Constructing the arrangement~$\cA_3'$ from~$\cA_2$ (left) and~$\cA_3$ from~$\cA_3'$
    (right).
    \label{fig:geometric_intuition}}
\end{figure}

For general $d$, let~$\cH_d\subset\cA_d$ be the subarrangement of 
the~$n$ parallel hyperplanes added to~$\cA_d'$ in order to
obtain~$\cA_d$.  Then~$\cA_d'\subset\RR^{d-1 }$ is constructed from~$\cA_{d-1}\times\RR$ by tilting
the hyperplanes~$\cH_{d-1}\times\RR$, this time with respect to their intersections with the
$x_{d-2}$-axis. The corresponding $d$-dimensional dual zonotope~${Z^*_d}'$ has~$\Omega(n^{d-2})$
edges in its 2D-shadow and each of these~$\Omega(n^{d-2})$ edges is subdivided~$n$ times by the
hyperplanes in~$\cH_d$ when constructing~$\cA_d$, respectively~${Z^*_d}$. See
Figure~\ref{fig:3dArrangement} for an illustration of the arrangement~$\cA_4'$.

\begin{figure}[h!]\centering
  \includegraphics[width=\textwidth]{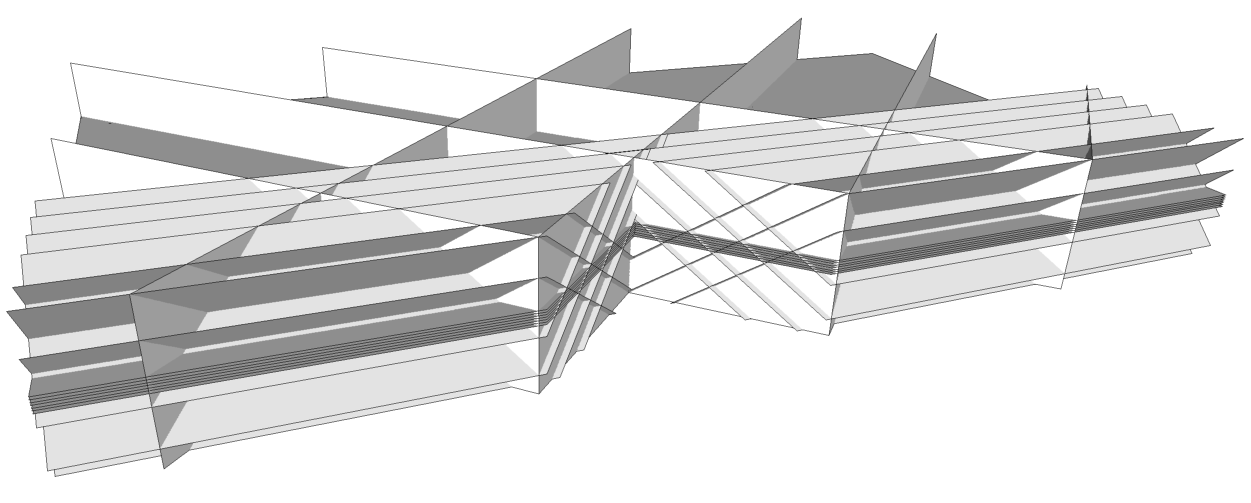}
  \caption{The affine arrangement~$\cA_4$ consists of three families of planes: 
    The first family $\cA_3'\times\RR$ forms a coarse vertical grid; 
    the second family (derived from $\cH_3\times\RR$ by tilting) forms a finer grid running from left to right;
    the last family~$\cH_4$ contains the parallel horizontal planes. \label{fig:3dArrangement}}
\end{figure}

\subsection{The algebraic construction.}

For~$k\ge1$, $n=4k+1$, and~$d\geq 2$ we define
\begin{align*}
    \rhs\phantom{'}&=(k-i)_{0\leq i\leq 2k}=\left(\begin{matrix}k\\ \vdots\\ -k\end{matrix}
    \right)\in\RR^{2k+1}\qquad\text{and}\\
    \rhs'&=\left(i-k+\frac{1}{2}\right)_{0\leq i\leq
      2k-1}=\left(\begin{matrix}-k+\tfrac{1}{2}\\ \vdots\\ k-\frac{1}{2}\end{matrix}
    \right)\in\RR^{2k}.
  \end{align*}
  Let $\allZeros,\allOnes\in\RR^\ell$ denote vectors with all entries equal to~$0$,
  respectively~$1$, of suitable size.  For convenience we index the columns of matrices from~$0$ to
  $d-1$ and the coordinates accordingly by~$x_0,\ldots,x_{d-1}$. Let 
  $\epsilon_i>0$, and for $1\le i\le d-1$ let~$A_i\in\RR^{n\times d}$ be the matrix with~$\epsilon_i
  \tbinom{\rhs\phantom{'}}{\rhs'}$ as its 0-th column
  vector, $\tbinom{\phantom{-}\allOnes}{-\allOnes}$ as its
  $i$-th column vector, $\tbinom{\allOnes}{\allOnes}$ as
  its $(i+1)$-st column vector, and zeroes otherwise. In the case~$i=d-1$ there is no $(i+1)$-st
  column of~$A_d$ and the final $\tbinom{\allOnes}{\allOnes}$-column is omitted:
\[
A_i = \!\!\!
 \bordermatrix{ &\scriptstyle0 &\scriptstyle 1 & \cdots &\scriptstyle i-1 &\scriptstyle i &\scriptstyle i+1 &\scriptstyle i+2 & \cdots &\scriptstyle d-1\cr
                &\epsilon_i\rhs\phantom{'}&\allZeros&\cdots&\allZeros&\phantom{-}\allOnes&\allOnes&\allZeros&\cdots&\allZeros\cr
                &\epsilon_i\rhs'&\allZeros&\cdots&\allZeros&-\allOnes&\allOnes&\allZeros&\cdots&\allZeros\cr}
 \in \RR^{(4k+1)\times d}.
\]

The linear arrangement~$\centralized{\cA}$ given by the $((d-1)n\times d)$-matrix~$A$ whose
horizontal blocks are the (scaled) matrices $\delta_1 A_1,\dots,\delta_{d-1} A_{d-1}$ for
$\delta_i>0$ defines a dual zonotope
by the construction of Section~\ref{sec:ArrangementsAndZonotopes}.
Since the parameters~$\delta_i$ do not change the arrangement~$\centralized{\cA}$, any choice of
the~$\delta_i$ yields the same combinatorial type of dual zonotope, but possibly different
realizations.  The choice of the~$\epsilon_i$ however may (and for sufficiently large values will)
change the combinatorics of~$\centralized{\cA}$ and hence the combinatorics of the corresponding
dual zonotope. For the purpose of constructing~$Z^*$ we set~$\alpha=\frac1{n+1}$, and
$\epsilon_i=\delta_i=\alpha^{i-1}$. This choice for $\epsilon_i$ ensures that the ``interesting''
part of the next family of hyperplanes nicely fits into the previous family. Compare
Figure~\ref{fig:geometric_intuition} (right): The interesting zig-zag part of family $A_i$ is
contained by the interval $\epsilon_i [-k -\tfrac14,k+\tfrac14]$ in $x_i$-direction 
and by $\epsilon_i [-\tfrac14,\tfrac14]$ in $x_{i+1}$-direction;
since $\epsilon_{i+1} = \tfrac{1}{n+1}\epsilon_i$ we obtain $\epsilon_{i+1}
(k+\tfrac14) < \epsilon_i \tfrac14$ and the zig-zags nicely fit into each other.
For these parameters we obtain
\begin{equation}\label{eq:A}
A = \left(
\begin{matrix}
  A_1\\ \alpha A_2\\ \vdots\\
\alpha^{d-2} A_{d-1}
\end{matrix}
\right) = \left(
\begin{array}{rrrcr}
  \rhs\phantom{'}  & \allOnes  & \allOnes &  & \\
  \rhs' & -\allOnes & \allOnes &  & \\[.9ex]
  \alpha^2 \rhs\phantom{'}  &  & \alpha \allOnes  & \alpha\allOnes & \\
  \alpha^2 \rhs' &  & -\alpha \allOnes & \alpha\allOnes & \\[.9ex]
   \vdots\quad & &  & \ddots &  \\[.9ex]
  \alpha^{2(d-2)} \rhs\phantom{'} &  &  &  & \alpha^{d-2} \allOnes\\
  \alpha^{2(d-2)} \rhs' &  &  &  & -\alpha^{d-2} \allOnes
\end{array}
\right).
\end{equation}

This matrix has size $(d-1)(4k+1)\times d= n(d-1)\times d$. The dual zonotope~$Z^*=Z_A^*$
has~$(d-1)n$ zones and is $d$-dimensional since~$A$ has rank~$d$.  According to
Section~\ref{sec:hyperplane_arrangements}, any point~$x\in\RR^d$ is labeled in
$\centralized{\cA}$ by a sign vector
$\sigma(x)=(\sigma_1,{\sigma_1}';\sigma_2,{\sigma_2}';\dots;\sigma_{d-1},{\sigma_{d-1}}')$
with~$\sigma_i\in\{+,0,-\}^{2k+1}$ and~${\sigma_i}'\in\{+,0,-\}^{2k}$. 
The following Lemma~\ref{lem:main_vertices} selects~$n^{d-1}$ vertices of~$\cA$.

\begin{lem}\label{lem:main_vertices}
  Let $H_{j_1}$,~$H_{j_2}$,~\dots,~$H_{j_{d-1}}$ be hyperplanes in~$\cA$, where each~$H_{j_i}$ is
  given by some row~$a_{j_i}$ of~$A_i$, 
  which is indexed by $j_i\in\{1,\dots,n\}$.
  Then the $d-1$ hyperplanes $H_{j_1}$,~$H_{j_2}$,~\dots,~$H_{j_{d-1}}$
  intersect in a vertex of~$\cA$ with sign vector
$(\sigma_1,{\sigma_1}';\sigma_2,{\sigma_2}';\dots;\sigma_{d-1},{\sigma_{d-1}}')
\in\{{+},0,{-}\}^{n(d-1)}$ with~$0$ at position~$j_i$
of the form
  \begin{equation}\label{eq:sign-vertices}
    (\sigma_i,{\sigma_i}') =
    \begin{cases}
      (+\dots+0-\dots-,-\dots-+\dots+) &\text{with sum $0$ or}\\
      (+\dots+-\dots-,-\dots-0+\dots+) &\text{with sum $0$}
    \end{cases}
  \end{equation}
  for each $i=1,2,\dots,d-1$. Conversely, each of these sign vectors corresponds to a vertex~$v$ of
  the arrangement. In particular,~$v$ is a generic vertex,~i.e.,~$v$ lies on exactly~$d-1$
  hyperplanes.
\end{lem}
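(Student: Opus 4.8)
The plan is to exploit the ``staircase'' shape of the matrix $A$ in \eqref{eq:A}. Since the block $\alpha^{i-1}A_i$ occupies only the columns $0$, $i$, $i+1$ (and only columns $0$ and $d-1$ when $i=d-1$), in the affine coordinates $x_0=1,x_1,\dots,x_{d-1}$ every hyperplane of $\cA$ coming from the $i$-th block has an equation of the form $\alpha^{i-1}c\pm x_i+x_{i+1}=0$, where $c$ is the relevant entry of $\rhs$ or $\rhs'$, the $x_{i+1}$-term is dropped when $i=d-1$, and the sign in front of $x_i$ is $+$ for a row coming from $\rhs$ and $-$ for a row coming from $\rhs'$. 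Picking one such hyperplane $H_{j_i}$ from each block yields a linear system whose coefficient matrix, written in the variable order $x_1,\dots,x_{d-1}$ and block order $1,\dots,d-1$, is upper bidiagonal with $\pm1$ on the diagonal and hence invertible; back-substitution from $x_{d-1}$ down to $x_1$ produces the unique common point $v$. This already shows that the $d-1$ hyperplanes are in general position and that $v$ is a vertex of $\cA$.

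Next I would bound the coordinates of $v$. Back-substitution gives $x_i(v)=\mp(\alpha^{i-1}c_i+x_{i+1}(v))$ with $|c_i|\le k$, so a downward induction on $i$, using $\alpha=\tfrac1{n+1}$ and $k=\tfrac{n-1}{4}$, yields $|x_i(v)|\le\alpha^{i-1}(k+\tfrac14)$ and, crucially, $|x_{i+1}(v)|\le\alpha^{i}(k+\tfrac14)=\tfrac{n}{4(n+1)}\alpha^{i-1}<\tfrac14\alpha^{i-1}$. This is exactly the ``the zig-zags fit into each other'' estimate already observed before \eqref{eq:A}, and it is the only quantitative input needed in what follows.

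Then I would read off the sign vector block by block. Fix $i$ and suppose first that $H_{j_i}$ comes from $\rhs$, with entry $b$, so that $x_i(v)+x_{i+1}(v)=-\alpha^{i-1}b$; the other case is symmetric, swapping the roles of $\rhs$ and $\rhs'$ and flipping the sign of $x_i$. For another row of $\rhs$ with entry $b'$, the value of its defining linear form at $v$ is a positive multiple of $b'-b$, whose sign depends only on the position of that row relative to $j_i$, yielding $\sigma_i=(+\cdots+\,0\,-\cdots-)$ with the $0$ in place $j_i$. For a row of $\rhs'$ with entry $b'$, the form at $v$ is a positive multiple of $\alpha^{i-1}(b+b')+2x_{i+1}(v)$; here $b+b'$ is a nonzero half-integer, so $|b+b'|\ge\tfrac12>2|x_{i+1}(v)|/\alpha^{i-1}$, hence the sign equals $\sign(b+b')$ and one gets $\sigma_i'=(-\cdots-\,+\cdots+)$. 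A short count of the block lengths ($|\rhs|=2k+1$, $|\rhs'|=2k$) shows that the signs in $(\sigma_i,\sigma_i')$ sum to $0$, and the symmetric case produces the second pattern of \eqref{eq:sign-vertices}. Since $(\sigma_i,\sigma_i')$ contains exactly one $0$ for each $i$, the point $v$ lies on exactly $d-1$ hyperplanes, so it is a generic vertex. For the converse, a row of $A_i$ is one of the $2k+1$ rows of $\rhs$ or one of the $2k$ rows of $\rhs'$, i.e.\ one of $n$ choices; the construction attaches to each of the $n^{d-1}$ choices a vertex with precisely these $d-1$ zero positions in its sign vector, so these vertices are pairwise distinct.

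The main obstacle is the bookkeeping in the third step: one must keep straight which of the two patterns of \eqref{eq:sign-vertices} (and which length, $2k+1$ or $2k$) belongs to the $\rhs$-rows versus the $\rhs'$-rows, and verify that the half-integer offsets built into $\rhs'$ are exactly what prevents the small perturbation $2x_{i+1}(v)$ from flipping a sign. Everything else reduces to the triangular back-substitution of the first step and the elementary inequality $\alpha(k+\tfrac14)\le\tfrac14$.
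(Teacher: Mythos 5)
Your proof is correct and follows essentially the same route as the paper: invertibility of the upper-bidiagonal system establishes existence and genericity of the vertex, back-substitution yields the bound $|v_{i+1}|<\tfrac14\alpha^{i-1}$, and the half-integer offsets in $\rhs'$ are what keep the small error term from flipping any sign. The only cosmetic difference is that the paper packages the sign-reading step as a perturbation argument from a reference point $x'$ on $H_{j_i}$ with $x'_{i+1}=0$ (showing $|a_jx'|\ge\tfrac12\alpha^{i-1}$ and then $|a_j(x-x')|<\tfrac12\alpha^{i-1}$), whereas you expand $a_jv$ directly as $\alpha^{i-1}(b\pm b')+2v_{i+1}$ and bound the additive term; these are the same estimate.
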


\begin{proof}
  The intersection $v=H_{j_1}\cap H_{j_2}\cap\dots\cap H_{j_{d-1}}$ is indeed a vertex since the
  matrix minor $(a_{j_i,\ell})_{i,\ell=1,\dots, d-1}$ has full rank. We solve the system
  $A'\tbinom1v=0$ to obtain~$v$, where $A'=(a_{j_i})_{i=1,\dots, d-1}$. As we will see, the entire
  sign vector of the vertex~$v$ is determined by its~``0'' entries whose positions are given by
  the~$j_i$. Hence every sign vector agreeing with Equation~\eqref{eq:sign-vertices} determines a
  set of hyperplanes $H_{j_i}$ and thus a vertex~$v$ of the arrangement.

  To compute the position of~$v$ with respect to the other hyperplanes we take a closer look at a
  block~$A_i$ of the matrix that describes our arrangement. For an arbitrary point $x \in \RR^d$
  with $x_0=1$ we obtain
  \[
  A_i x =
  \begin{pmatrix}
    \alpha^{i-1}\rhs\phantom{'} & \phantom{-}\allOnes & \allOnes \\
    \alpha^{i-1}\rhs'& -\allOnes & \allOnes
  \end{pmatrix}
  \begin{pmatrix}
    1 \\ x_i \\ x_{i+1}
  \end{pmatrix}.
  \]
  This is equivalent to the $2$-dimensional(!) arrangement shown in Figure~\ref{fig:geometric_intuition} on the left. We
  will show that if~$x$ lies on one of the hyperplanes and if~$\left| x_{i+1} \right| <
  \tfrac{1}{4}\alpha^{i-1}$, then $x$ satisfies the required sign pattern~\eqref{eq:sign-vertices}.
  
  We start with an even simpler observation: If~$x'$ lies on one of the hyperplanes and has
  $x_{i+1}'=0$ (so in effect we are looking at a $1$-dimensional affine hyperplane arrangement), then
  there are:
  \begin{itemize}
  \item $2k$ ``positive'' row vectors~$a_j$ of~$A_i$ with~$a_j x' > 0$,
  \item $2k$ ``negative'' row vectors~$a_j$ of~$A_i$ with~$a_j x' < 0$, and
  \item one ``zero'' row vector corresponding to the hyperplane $x'$ lies on.
  \end{itemize}
  The order of the rows of~$A_i$ is such that the signs match the sign pattern
  of~$(\sigma_i,{\sigma_i}')$ in~\eqref{eq:sign-vertices}.  Since the values in~$\alpha^{i-1}\rhs$
  and~$\alpha^{i-1}\rhs'$ differ by at least~$\tfrac{1}{2}\alpha^{i-1}$ we have in fact $a_j x' \geq
  \tfrac{1}{2}\alpha^{i-1}$ for ``positive'' row vectors and $a_j x' \leq -\tfrac{1}{2}\alpha^{i-1}$
  for the ``negative'' row vectors of~$A_i$. Hence we have
  \begin{equation*}\label{eq:a_j}
    \left| a_j x' \right| \geq \tfrac{1}{2}\alpha^{i-1}.
  \end{equation*}
  If we now consider a point~$x$ with $\left| x_{i+1} \right| <
  \tfrac{1}{4}\alpha^{i-1}$ on the same hyperplane as~$x'$, then $\left| {x_{i}}' - x_{i} \right| =
  \left| x_{i+1} \right| < \tfrac{1}{4}\alpha^{i-1}$. For the row vectors $a_j$ with $a_j x' \neq 0$ 
  we obtain:
  \begin{align*}
    \left| a_j x \right| 
    &\geq \left| a_j x' \right| - \left| a_j(x-x') \right| \\
    &\geq \tfrac{1}{2}\alpha^{i-1} - (\left|x_i - {x_i}'\right| + \left|x_{i+1} - {x_{i+1}}'\right|)\\ 
    &> \tfrac{1}{2}\alpha^{i-1} - \tfrac{1}{4}\alpha^{i-1} - \tfrac{1}{4}\alpha^{i-1} = 0.
  \end{align*}
  Hence the sign pattern of $x$ is the same as the sign pattern of $x'$.
  
  We conclude the proof by showing that the required upper bound $\left| v_{i+1} \right| <
  \tfrac{1}{4}\alpha^{i-1}$ holds for the coordinates of the selected vertex~$v$. For all~$i'=1,2,\dots,d-2$ the
  inequality $a_{j_{i'}}\tbinom1v =0$ directly yields the bound $|v_{i'}| \leq
  k\alpha^{i'-1}+|v_{i'+1}|$. Further $a_{j_{d-1}}\tbinom1v = 0$ implies $|v_{d-1}| \le k
  \alpha^{d-2}$ and thus recursively
  \begin{equation*}
    \begin{aligned}\label{eq:v_i}
      |v_{i+1}| &\leq k\alpha^{i}+|v_{i+2}| 
                 \leq k\alpha^{i}+k\alpha^{i+1}+|v_{i+3}|\\
                &\leq\dots 
                 \leq k \alpha^{i}+k\alpha^{i+1}+\dots+|v_{d-1}|
                 \leq k \sum_{l=i}^{d-2}\alpha^l
                   <  k \alpha^{i} \sum_{l=0}^{\infty}\alpha^l\\[-5pt]
    & = \frac{k\alpha^{i}}{1-\alpha}
      = \frac{k}{4k+1} \alpha^{i-1}
      < \frac{1}{4}\alpha^{i-1}.\\[-8mm]
    \end{aligned}
  \end{equation*}
\end{proof}
\vspace{4mm}

\noindent
The selected vertices of Lemma~\ref{lem:main_vertices} correspond to certain vertices of the dual
zonotope~$Z^*$ associated to the arrangement~$\cA$. Rather than proving that these
vertices of~$Z^*$ survive the projection to the last two coordinates, 
we consider the edges corresponding to the
sign vectors obtained from Equation~\eqref{eq:sign-vertices} by replacing the~``0'' in
$(\sigma_{d-1},\sigma_{d-1}^{\prime})$ by either a~``$+$'' or a~``$-$'', and their negatives, which
correspond to the antipodal edges.

\begin{lem}\label{lem:main_survive}
  Let $S$ be the set of sign vectors $\pm(\sigma_1,{\sigma_1}';$
  $\sigma_2,{\sigma_2}';\dots;\sigma_{d-1},{\sigma_{d-1}}')$ of the form
  \[
  (\sigma_i,{\sigma_i}') =
  \begin{cases}
    (+\dots+0-\dots-,-\dots-+\dots+) \; &\text{with sum $0$ or}\\
    (+\dots+-\dots-,-\dots-0+\dots+) \; &\text{with sum $0$}
  \end{cases}
  \]
  for $1\le i\le d-2$ and
  \[
  (\sigma_{d-1},{\sigma_{d-1}}') = (+\dots+-\dots-,-\dots-+\dots+) \; \text{with sum $\pm1$}.
  \]
  Then the sign vectors in~$S$ correspond to $2n^{d-2}(n+1)$ edges of~$Z^*$, all of which survive the
  projection to the first two coordinates.
\end{lem}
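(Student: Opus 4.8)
The plan is to prove the two assertions of the lemma separately: the count of edges, and their survival under the projection $\pi_2$ to the first two coordinates (via Lemma~\ref{lem:proj}).

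\emph{Counting the edges.} Each $\sigma\in S$ has exactly $d-2$ zeros, one in each block $A_1,\dots,A_{d-2}$, say in row $j_i$ of block $i$, and none in $A_{d-1}$. The rows $a_{j_1},\dots,a_{j_{d-2}}$ form a bidiagonal pattern in columns $1,\dots,d-1$ — row $a_{j_i}$ is supported on columns $0,i,i+1$ with a nonzero entry in column $i$ — so they are linearly independent; hence $\sigma$ labels a generic $1$-face, i.e.\ an edge, of $\cA$ and thus of $Z^*$. To see that every $\sigma\in S$ actually occurs, I would fix $j_1,\dots,j_{d-2}$: the equations $a_{j_i}\binom1x=0$ cut out a line parametrised by $x_{d-1}$, and the same estimates as in the proof of Lemma~\ref{lem:main_vertices} show that on an interval strictly containing $[-k\alpha^{d-2},k\alpha^{d-2}]$ the blocks $A_1,\dots,A_{d-2}$ carry exactly the prescribed patterns; the $n$ hyperplanes of $A_{d-1}$ (which sit inside that interval, at distinct positions) then cut this piece of the line into $n+1$ edges, one per admissible block-$(d-1)$ pattern. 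Counting $n$ pattern choices in each of the blocks $1,\dots,d-2$, $n+1$ in block $d-1$, and a factor $2$ for antipodes — no sign vector equals its negative, and distinct choices give distinct edges — gives the count $2n^{d-2}(n+1)$.

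\emph{Survival under $\pi_2$.} Fix an edge $E$ with $\sigma(E)\in S$. By Lemma~\ref{lem:proj} (case $k=2$) it suffices to show that the rows of $N_E$, truncated to the last $d-2$ coordinates $x_2,\dots,x_{d-1}$, positively span $\RR^{d-2}$. Since $E$ is generic, all $2^{d-2}$ ways of filling its $d-2$ zeros by $+$ or $-$ yield regions, so by Lemma~\ref{lem:N_F} the rows of $N_E$ are precisely the vectors $\sigma A$ over these fillings. From the block structure of $A$ one computes that $\sigma A$, restricted to columns $2,\dots,d-1$ and with the index written as $t=1,\dots,d-2$, equals $\big(\alpha^{t-1}(p_t+\alpha q_{t+1})\big)_{t=1}^{d-2}$, where $p_i$ is the sum of all entries of the $i$-th block $(\sigma_i,{\sigma_i}')$ of $\sigma$ and $q_i$ is the sum of the entries of $\sigma_i$ minus that of ${\sigma_i}'$. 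For $1\le t\le d-2$ the value $p_t\in\{+1,-1\}$ is exactly the sign used to fill block $t$, while evaluating $p_i,q_i$ on the patterns of \eqref{eq:sign-vertices} and on the block-$(d-1)$ region patterns gives the uniform bound $|q_{t+1}|\le n$; hence $|\alpha q_{t+1}|\le n/(n+1)<1$, and in particular $p_t+\alpha q_{t+1}$ has the same sign as $p_t$.

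The spanning claim now follows by a short Farkas argument: given any nonzero $y\in\RR^{d-2}$, pick the filling with $p_t=\sign(y_t)$ for every $t$; then each term obeys $y_t\,\alpha^{t-1}(p_t+\alpha q_{t+1})\ge\alpha^{t-1}\big(|y_t|-(1-\alpha)|y_t|\big)=\alpha^{t}|y_t|\ge0$, strictly when $y_t\neq0$, so $\langle y,\sigma A\rangle>0$ for this $\sigma$. Hence no nonzero $y$ lies in the polar of $\cone\{\text{rows of }N_E\}$, so these rows positively span $\RR^{d-2}$ and $E$ survives $\pi_2$. I expect the real obstacle to be the bookkeeping: pinning down exactly which $\sigma\in S$ give edges and with what multiplicity, and verifying the bound $|q_i|\le n$ — which is precisely what the tuning $\alpha=\tfrac1{n+1}$ is designed to make true. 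Once these are settled, the Farkas step is immediate.
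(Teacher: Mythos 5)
Your proposal is correct and follows essentially the same route as the paper: the edge count comes from the same $n^{d-2}\cdot(n+1)\cdot 2$ enumeration (the paper derives it by perturbing the zero in block $d-1$ of the Lemma~\ref{lem:main_vertices} vertices rather than by slicing a line, but the two views are equivalent), and the survival argument is the paper's computation of $a(F)_i=\alpha^{i-2}\tau(F)_{i-1}+\alpha^{i-1}q$ with $|q|\le n$, placing one truncated normal in each orthant of $\RR^{d-2}$. Your explicit Farkas step at the end merely spells out the ``one normal per orthant $\Rightarrow$ positive spanning'' implication that the paper asserts directly, so the arguments coincide in substance.
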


\begin{proof}
  The sign vectors of $S$ indeed correspond to edges of~$Z^*$ since they are obtained
  from sign vectors of non-degenerate(!) vertices by substituting one~``0'' by a~``$+$'' or a~``$-$''.
  
  Further there are $2n^{d-2}(n+1)$ edges of the specified type: 
  Firstly there are~$n$ choices where to place
  the~``0'' in $(\sigma_i,{\sigma_i}')$ for each $i = 1,\dots,d-2$, which accounts for the
  factor~$n^{d-2}$. Let~$p$ be the number of~``$+$''-signs in~$\sigma_{d-1}$. Thus there are~$2k+2$
  choices for~$p$, and for each choice of~$p$ there are two choices for~${\sigma_{d-1}}'$, except
  for~$p=0$ and~$p=2k+1$ with just one choice for~${\sigma_{d-1}}'$. This amounts to $2(2k+2)-2=n+1$
  choices for~$(\sigma_{d-1},{\sigma_{d-1}}')$.  The factor of~$2$ is due to the 
  central symmetry.
  
  Let~$e$ be an edge with sign vector~$\sigma(e)\in S$. In order to apply Lemma~\ref{lem:proj} we
  need to determine the normals to the facets containing~$e$.  So let~$F$ be a facet containing~$e$.
  The sign vector~$\sigma(F)$ is obtained from~$\sigma(e)$ by replacing each~``$0$'' in~$\sigma(e)$
  by either~``$+$'' or~``$-$''; see Lemma~\ref{lem:N_F}. For brevity we encode~$F$ by a
  vector~$\tau(F)\in\{+,-\}^{d-2}$ corresponding to the choices for~``$+$'' or~``$-$'' made.
  Conversely, there is a facet~$F_\tau$ containing~$e$ for each vector $\tau\in\{+,-\}^{d-2}$,
  since~$e$ is non-degenerate.

  The supporting hyperplane for~$F$ is~$a(F) x= 1$ with $a(F)=\sigma(F) A$ being a linear
  combination of the rows of~$A$.  We compute the $i$-th component of~$a(F)$ for $i=2,3,\dots,d-1$:
  \begin{align*}
    a(F)_i &= (\sigma(F)A)_i 
    = ((\sigma_{i-1},\sigma_{i-1}')A_{i-1})_i + ((\sigma_i,\sigma_i')A_i)_i\\
    &= \alpha^{i-2}(\sigma_{i-1},\sigma_{i-1}')\left(\begin{matrix} \allOnes\\ \allOnes
      \end{matrix}\right) + \alpha^{i-1}(\sigma_{i},\sigma_{i}') \left(\begin{matrix} \phantom{-}\allOnes\\ -\allOnes
      \end{matrix}\right)
  \end{align*}
  Since we replace the zero of $(\sigma_{i-1},\sigma_{i-1}')$ by~$\tau(F)_{i-1}$ in order to
  obtain~$\sigma(F)$ from~$\sigma(e)$ we have
  $(\sigma_{i-1},\sigma_{i-1}')\tbinom{\allOnes}{\allOnes}=\tau(F)_{i-1}$.
  Since 
  $\big|(\sigma_i,\sigma_i')\tbinom{\phantom{-}\allOnes}{-\allOnes}\big|$ 
  is at most~$n$ it follows that
  \begin{itemize}
  \item $a(F)_i  \geq \phantom{-}\alpha^{i-2} - n\alpha^{i-1} = \phantom{-}\alpha^{i-1} > 0$ holds for $\tau(F)_{i-1} = +$ and
  \item $a(F)_i  \leq - \alpha^{i-2} + n\alpha^{i-1} = -\alpha^{i-1} < 0$ holds for $\tau(F)_{i-1} = -$.
  \end{itemize}
  In other words, we have for $i=2,3,\dots,d-1$:
  \begin{equation}\label{eq:a(F)}
    \sign a(F)_i = \tau(F)_i
  \end{equation}
  
  It remains to show that the last $d-2$ coordinates of the $2^{d-2}$ normals of the facets
  containing~$e$, that is, the facets $F_\tau$ for all $\tau\in\{+,-\}^{d-2}$, span $\RR^{d-2}$.
  But Equation~\eqref{eq:a(F)} implies that each of the orthants of $\RR^{d-2}$ contains one of the
  (truncated) normal vectors~$(a(F_\tau)_i)_{i=2,\dots,d-1}$.  Hence the (truncated) normals of all
  facets containing~$e$ positively span $\RR^{d-2}$ and~$e$ survives
  the projection to the first two coordinates by Lemma~\ref{lem:proj}.
\end{proof}

This completes the construction and analysis of~$Z^*$. Scrutinizing the sign vectors of the
edges specified in Lemma~\ref{lem:main_survive} one can
further show that these edges actually form a closed
polygon in~$Z^*$. Thus this closed polygon is the shadow boundary of~$Z^*$ (under projection to
the first two coordinates) and its projection is 
a $2n^{d-2}(n+1)$-gon. This yields the precise size of the projection of~$Z^*$. The reader is invited to localize the edges corresponding to the closed polygon from
Lemma~\ref{lem:main_survive} and the vertices from Lemma~\ref{lem:main_vertices} in Figures~\ref{fig:geometric_intuition} and~\ref{fig:3dArrangement}.

The following Theorem~\ref{thm:main_2} summarizes the construction of~$Z^*$ and its properties. Our
main result as stated in Theorem~\ref{thm:main} follows. Figure~\ref{fig:3d-easteregg} displays a
3-dimensional example, Figure~\ref{fig:4d-easteregg} a 4-dimensional example.

\begin{figure}[h!]\centering
  \includegraphics[width=.45\textwidth]{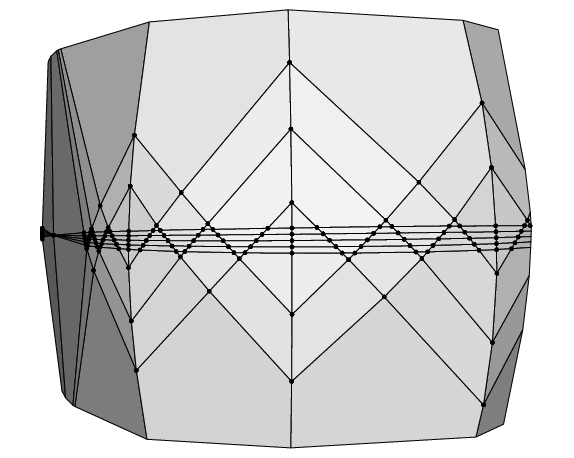}
  \hspace{1cm}
  \includegraphics[width=.45\textwidth]{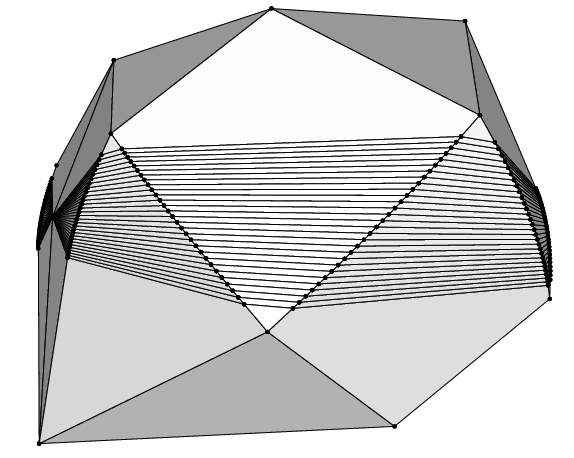}
  \caption{Two different projections of a dual 4-zonotope with cubic 2D-shadow. On the left
    the projection to the first two and last coordinate (clipped in 
    vertical direction) and on the right the projection to the first
    three coordinates. 
    \label{fig:4d-easteregg}}
\end{figure}

\begin{thm}\label{thm:main_2}
  Let~$k$ and~$d\geq 2$ be positive integers, and let~$n=4k+1$. The dual $d$-zonotope~$Z^*=Z_A^*$
  corresponding to the matrix~$A$ from Equation~\eqref{eq:A} has
  $(d-1)n$ zones and its projection to the first two coordinates has (at least) $2n^{d-1}+2n^{d-2}$ vertices.
\end{thm}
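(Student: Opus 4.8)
The plan is to read off Theorem~\ref{thm:main_2} from the matrix~$A$ of~\eqref{eq:A} together with the two preceding lemmas, treating its three assertions — $d$-dimensionality of~$Z^*$, the count $(d-1)n$ of zones, and the lower bound $2n^{d-1}+2n^{d-2}$ on the number of vertices of the shadow under the projection~$\pi_2$ to the first two coordinates — one at a time. First I would dispose of the two structural claims by inspecting~$A$. It has $(d-1)(4k+1)=(d-1)n$ rows, and these are pairwise non-parallel with none a multiple of~$e_0=(1,0,\dots,0)$: within a block $\alpha^{i-1}A_i$ the $i$-th coordinate equals the constant $\pm\alpha^{i-1}\neq0$ while the $0$-th coordinate runs over the pairwise distinct entries of $\epsilon_i\tbinom{\rhs}{\rhs'}$, and a row from the $\rhs$-part (whose $i$-th entry is $+$) is never proportional to one from the $\rhs'$-part (whose $i$-th entry is $-$); rows from different blocks $i<i'$ differ because the $i$-th coordinate is nonzero in block~$i$ but vanishes in block~$i'$; and $i\ge1$ forbids a row from being a multiple of~$e_0$. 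A direct inspection of the block structure of~$A$ also shows that it has rank~$d$. Hence, by the construction recalled in Section~\ref{sec:ArrangementsAndZonotopes}, $Z^*=Z_A^*$ is a genuine $d$-dimensional dual zonotope with exactly $(d-1)n$ zones.

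The substance is the vertex count, and here Lemma~\ref{lem:main_survive} does essentially all the work: the set~$S$ consists of $2n^{d-2}(n+1)$ sign vectors, each giving a distinct edge of~$Z^*$, and every one of these edges survives~$\pi_2$. I would then invoke the defining property of a surviving face from Section~\ref{sec:Projections}: if an edge~$e$ of~$Z^*$ survives~$\pi_2$, then $\pi_2(e)$ is an edge of the polygon~$\pi_2(Z^*)$ and $\pi_2^{-1}\big(\pi_2(e)\big)\cap Z^*=e$. The second identity forces distinct surviving edges to have distinct images, so $\pi_2(Z^*)$ has at least $2n^{d-2}(n+1)$ edges; being a polygon, it therefore also has at least $2n^{d-2}(n+1)=2n^{d-1}+2n^{d-2}$ vertices, which is the asserted bound. (Tracking the cyclic order of the sign vectors of~$S$ along the boundary one can moreover show that the surviving edges close up into a single polygon, the shadow boundary of~$Z^*$, so that this count is in fact exact; but only the lower bound is needed, and it immediately yields the $\Omega(n^{d-1})$ bound of Theorem~\ref{thm:main}.)

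The step I expect to be the main obstacle is the injectivity bookkeeping underlying Lemma~\ref{lem:main_survive}: verifying that the $n$ possible positions of the ``$0$'' in each of the first $d-2$ blocks, the $n+1$ admissible patterns in the last block, and the factor~$2$ from central symmetry genuinely produce pairwise distinct sign vectors — hence pairwise distinct edges of~$Z^*$ — and that these remain distinct after~$\pi_2$ (which is automatic from $\pi_2^{-1}(\pi_2(e))\cap Z^*=e$). For $d=2$ no projection is involved and one argues directly: there $Z^*$ is a centrally symmetric $2n$-gon, so its shadow, namely $Z^*$ itself, has $2n=\Theta(n)$ vertices, which already establishes Theorem~\ref{thm:main}. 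Assembling the three assertions then gives Theorem~\ref{thm:main_2}.
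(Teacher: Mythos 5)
Your proposal takes essentially the same route as the paper. The paper gives no separate proof of Theorem~\ref{thm:main_2}: it presents the statement as a summary of the construction, with Lemma~\ref{lem:main_survive} supplying the $2n^{d-2}(n+1)=2n^{d-1}+2n^{d-2}$ surviving edges and the observation that a polygon with $m$ edges has $m$ vertices doing the rest. Your added checks that the rows of~$A$ are pairwise non-parallel, none is a multiple of~$e_0$, and $A$ has rank~$d$ make explicit what the paper takes for granted from the construction, and your injectivity remark (distinct surviving edges have distinct images because $\pi_2^{-1}(\pi_2(e))\cap Z^*=e$) is exactly the right reason the count transfers to the shadow.

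One point you half-raised but did not close cleanly: the case $d=2$. There $\pi_2$ is the identity, so the shadow is $Z^*$ itself, a $2n$-gon with exactly $2n$ vertices, whereas the statement asserts at least $2n^{d-1}+2n^{d-2}=2n+2$. The discrepancy comes from the ``factor of~$2$ for central symmetry'' in Lemma~\ref{lem:main_survive}: when $d=2$ there is no block $(\sigma_i,\sigma_i')$ with a ``$0$'' to break the symmetry, and the two extreme patterns $(+\cdots+,\,-\cdots-)$ and $(-\cdots-,\,+\cdots+)$ are each other's negatives, so the set $S$ has size $2(n+1)-2=2n$, not $2(n+1)$. (For $d\ge3$ the $\pm$ genuinely doubles the count, since the placement of the zero in $(\sigma_1,\sigma_1')$ distinguishes $\sigma$ from $-\sigma$.) So you cannot say ``assembling the three assertions gives Theorem~\ref{thm:main_2}'' after having computed only $2n$ vertices for $d=2$. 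This is an off-by-two present in the paper's own statement as well and is immaterial for the $\Omega(n^{d-1})$ bound of Theorem~\ref{thm:main}, but your proof should either restrict the exact count to $d\ge3$ or flag the $d=2$ case as an exception rather than a confirmation.
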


\begin{rem}
  As observed in Amenta \& Ziegler \cite[Sect.~5.2]{Z51} any result about 
  the complexity lower bound for projections to the plane ($2$D-shadows) also
  yields lower bounds for the projection to dimension $k$, a question which interpolates
  between the upper bound problems for polytopes/zonotopes ($k=d-1$) and 
  the complexity of parametric linear programming ($k=2$),
  the task to compute the LP optima for all linear combinations of two objective functions
  (see \cite[pp.~162-166]{Chv}).

  In this vein, from Theorem~\ref{thm:main} and the fact that in a dual of 
  a cubical zonotope every vertex lies in
  exactly $f_k(C_{d-1})=\binom {d-1}k2^k$ different $k$-faces (for $k<d$),
  and every such polytope contains at most $n^{d-1}$ faces of dimension~$k$,
  one derives that in the worst case 
  $\Theta(n^{d-1})$ faces of dimension $k-1$ survive in a $k$D-shadow 
  of the dual of a $d$-zonotope with $n$ zones.
\end{rem}

\bibliography{main}

\end{document}